\documentclass[11pt]{article}

\usepackage{mathrsfs}
\usepackage{amsmath}
\usepackage{amssymb}
\usepackage{amsthm}
\usepackage{txfonts}
\usepackage{amsfonts}
\usepackage{latexsym}
\usepackage{indentfirst}

\usepackage{wrapfig}

\linespread{1.10}

\newcommand\RR{\mathbb{R}}
\newcommand\Rd{\mathbb{R}^d}
\newcommand\NN{\mathbb{N}}

\newcommand\PP{\mathbb{P}}
\newcommand\Mean{\text{E}}

\newcommand\vx{\boldsymbol{x}}   
\newcommand\vy{\boldsymbol{y}}   
\newcommand\vz{\boldsymbol{z}}
\newcommand\vc{\boldsymbol{c}}
\newcommand\vxi{\boldsymbol{\xi}}

\newcommand\vk{\boldsymbol{k}}
\newcommand\valpha{\boldsymbol{\alpha}}
\newcommand\vbeta{\boldsymbol{\beta}}

\newcommand\vvarphi{\boldsymbol{\varphi}}
\newcommand\veta{\boldsymbol{\eta}}
\newcommand\ve{\boldsymbol{e}}
\newcommand\vrho{\boldsymbol{\rho}}
\newcommand\vf{\boldsymbol{f}}
\newcommand\vb{\boldsymbol{b}}

\newcommand\vv{\boldsymbol{v}}
\newcommand\vt{\boldsymbol{t}}
\newcommand\vzeta{\boldsymbol{\zeta}}
\newcommand\hatvf{\boldsymbol{\hat{f}}}
\newcommand\hatvc{\boldsymbol{\hat{c}}}

\newcommand\vL{\boldsymbol{\mathcal{L}}}
\newcommand\vS{\boldsymbol{S}}
\newcommand\vY{\boldsymbol{Y}}

\newcommand\vA{\mathsf{A}}

\newcommand\ud{\textup{d}}
\newcommand\Normal{\mathcal{N}}
\newcommand\Order{\mathcal{O}}
\newcommand\Span{\text{span}}

\newcommand\Filter{\mathcal{F}}
\newcommand\Domain{\mathcal{D}}
\newcommand\Aset{\mathcal{A}}

\newcommand\Var{\text{Var}}
\newcommand\Cov{\text{Cov}}

\newcommand\Det{\text{det}}

\newcommand\Ker{\mathcal{K}}
\newcommand\rca{\mathrm{rca}}
\newcommand\range{\mathrm{range}}

\newcommand\Hilbert{\mathcal{H}}
\newcommand\Banach{\mathcal{B}}
\newcommand\Cont{\mathrm{C}}
\newcommand\Leb{\mathrm{L}}

\newcommand{\norm}[1]{\left\lVert#1\right\rVert}
\newcommand{\abs}[1]{\left\lvert#1\right\rvert}


\def\XXint#1#2#3{{\setbox0=\hbox{$#1{#2#3}{\int}$ }
\vcenter{\hbox{$#2#3$ }}\kern-.6\wd0}}


\newtheorem{theorem}{Theorem}[section]

\newtheorem{corollary}[theorem]{Corollary}
\newtheorem{proposition}[theorem]{Proposition}

\theoremstyle{definition}
\newtheorem{definition}[theorem]{Definition}
\newtheorem{example}[theorem]{Example}

\theoremstyle{remark}

\numberwithin{equation}{section}
\numberwithin{figure}{section}
\numberwithin{table}{section}


\begin{document}



\title{Kernel-based Approximation Methods for Generalized Interpolations: \\ A Deterministic or Stochastic Problem?}

\author{Qi Ye \\ 
School of Mathematical Sciences, South China Normal University, \\ Guangzhou, Guangdong, China, 510631 \\
Email: yeqi@m.scnu.edu.cn \\
}


\date{}

\maketitle

\begin{abstract}
In this article, we solve a deterministically generalized interpolation problem by a stochastic approach.
We introduce a kernel-based probability measure on a Banach space by a covariance kernel which is defined on the dual space of the Banach space. The kernel-based probability measure provides a numerical tool to construct and analyze the kernel-based estimators conditioned on non-noise data or noisy data including algorithms and error analysis. Same as meshfree methods, we can also obtain the kernel-based approximate solutions of elliptic partial differential equations by the kernel-based probability measure.

{\bf Key words:}
Kernel-based approximation method, generalized interpolation, meshfree method, 
kernel-based probability measure, positive definite kernel, numerical solution of partial differential equation.

\emph{AMS Subject Classification:}
41A65, 46E22, 65D05, 65R99.
\end{abstract}

%



\section{Introduction}\label{sec:Intr}

Kernel-based approximation methods are fundamental approaches of meshfree methods in \cite{Buhmann2003,Fasshauer2007,Wendland2005} and statistical learning in \cite{HastieTibshiraniFriedman2009,ScholkopfSmola2002,SteinwartChristmann2008}.
The kernel-based approximation methods are
known by a variety of names in the monographs including scattered data approximation, radial basis function, kernel-based collocation, smoothing spline, Gaussian process regression, and kriging.
The papers \cite{ScheuererSchabackSchlather2012,Wu1992} and the book \cite{FasshauerMcCourt2015} presented that the kernel-based estimators of deterministic
and stochastic interpolations had mathematically equivalent formulas.
In the papers \cite{Ye2016PDK,Ye2017Kriging,Ye2017PDE}, we combined the theory and knowledge of numerical analysis, regression analysis, and stochastic analysis to introduce a concept of kernel-based probability measures on Sobolev spaces to improve the kernel-based approximation methods in high-dimensional interpolations and numerical solutions of partial differential equations.

In this article, we generalize the definitions and theorems of original kernel-based probability measures such that we can construct the kernel-based estimators of generalized interpolations in Banach spaces.
Different from the classical numerical methods, we will solve a deterministically generalized interpolation problem by a stochastic approach.
Now we show the main idea by a simple example.
For the standard interpolation problem, we have the data
\[
(\vx_1,f_1),\ldots,(\vx_n,f_n)\in\Domain\times\RR,
\]
where $\Domain$ is a domain of $\Rd$. By the meshfree methods, for another location $\vx\in\Domain$, we use a positive definite kernel $K:\Domain\times\Domain\to\RR$ to construct the kernel basis
\[
K(\vx,\vx_1),\ldots,K(\vx,\vx_n),
\]
to approximate the unknown value at $\vx$ dependent of the interpolation conditions. For the Hermite-Birkhoff interpolation problem, we have the data
\[
\left(\delta_{\vx_1}\circ\Delta,f_1\right),\ldots,\left(\delta_{\vx_n}\circ\Delta,f_n\right)
\in\Cont^{2}(\Domain)^{\ast}\times\RR,
\]
where $\delta_{\vx}$ is the point evaluation function at $\vx$, $\Delta$ is the Laplace operator, and $\Cont^{2}(\Domain)^{\ast}$ is the dual space of $\Cont^{2}(\Domain)$.
If $K\in\Cont^{4}(\Domain\times\Domain)$, then we have the kernel basis
\[
\Delta_{\vy}K(\vx,\vy)|_{\vy=\vx_1},\ldots,
\Delta_{\vy}K(\vx,\vy)|_{\vy=\vx_n},
\]
to construct the estimator. For the generalized interpolation problem discussed here, we have the data
\[
(L_1,f_1),\ldots,(L_n,f_n)\in\Banach^{\ast}\times\RR,
\]
where $\Banach^{\ast}$ is the dual space of a Banach space $\Banach$.
By the probability theory, we introduce a kernel-based probability measure $\PP_{\Ker}^{\mu}$ on the Banach space $\Banach$ with a mean $\mu\in\Banach$ and a covariance kernel $\Ker:\Banach^{\ast}\times\Banach^{\ast}\to\RR$.
Moreover, we will compute the estimator by the average over the interpolation event $\Aset_n:=\left\{\omega\in\Banach:L_1\omega=f_1,\ldots,L_n\omega=f_n\right\}$ measured by $\PP_{\Ker}^{\mu}$.
If $\delta_{\vx}\in\Banach^{\ast}$, then the estimator is a linear combination of the kernel basis
\[
\Ker(\delta_{\vx},L_1),\ldots,\Ker(\delta_{\vx},L_n),
\]
where its coefficients are solved by a related linear system.
In meshfree methods, the inner products of reproducing kernel Hilbert spaces are used to compute the kernel-based estimators from deterministic data.
In kriging methods, the covariances of random variables are used to compute the kernel-based estimators from random data.
By using the kernel-based probability measures on Banach spaces,
we connect the dual-bilinear products of Banach spaces and the covariances of Gaussian processes to construct the kernel-based estimators from the deterministic data.
This indicates that approximation theory could be redone in a stochastic framework.

Finally, we give the outlines of this article.
In Section \ref{sec:KerProbSpace}, we introduce kernel-based probability measures on Banach spaces. Definition \ref{def:KerProbSpace} shows that the kernel-based probability measure is primarily dependent on the covariance kernels. Moreover, we illustrate the construction of covariance kernels by the positive definite kernels as shown in Example~\ref{exa:PDK-Sobolev}. The covariance kernels can be viewed as a generalization of the classical positive definite kernels in meshfree methods.
Then, we construct and analyze kernel-based estimators by the kernel-based probability measures
in Section \ref{sec:MeshfreeApprox}.
In Theorems \ref{thm:kerappr-inter} and \ref{thm:kerappr-inter-noise}, we obtain the kernel-based estimators conditioned on non-noise data and noisy data mentioned in Examples \ref{exa:Hermite-Birkhoff} and \ref{exa:noise-Hermite-Birkhoff}, respectively.
We also investigate the error analysis of kernel-based estimators by the kernel-based probability measures such as the convergence in Theorems \ref{thm:kerappr-conv} and \ref{thm:kerappr-conv-noise} and the error bounds in Theorems \ref{thm:kerappr-error} and \ref{thm:kerappr-error-noise}.
This indicates that deterministic numerical problems could be solved by a stochastic approach.
In Section \ref{sec:MeshfreeApproxPDE}, we apply the kernel-based probability measures to construct the kernel-based estimators to approximate the solutions of elliptic partial differential equations.
This article gives the general theorems of interpolations by the kernel-based probability measures.
In the next article, we will investigate many research topics of meshfree methods by the kernel-based probability measures, for example, multiple-kernel approximation, optimal designs of kernel basis, and kernel-based solutions of stochastic differential equations.

\section{Kernel-based Probability Measures on Banach Spaces}\label{sec:KerProbSpace}

In this section, by the theorems of Gaussian measures on Banach spaces in \cite[Chapter VIII]{Stroock1993}, we generalize the original concept of kernel-based probability measures on Sobolev spaces in \cite{Ye2017PDE}.
We will investigate the kernel-based approximation methods of deterministic data by the kernel-based probability measures in Section \ref{sec:MeshfreeApprox}.
Roughly, the kernel-based probability measures can be viewed as a generalization of Wiener measures.

\begin{example}\label{exa:BrownianMotion}
Let $\Cont([0,1])$ be the collection of all real continuous functions on $[0,1]$ endowed with uniform norm.
Let $\Filter_{\Cont([0,1])}$ be the Borel $\sigma$-algebra of $\Cont([0,1])$.
The structure theorem of Wiener measures shows that there exists a probability measure $\PP_{W}$ on the measurable space $\left(\Cont([0,1]),\Filter_{\Cont([0,1])}\right)$
such that the coordinate mapping process $W_t(\omega):=\omega(t)$ for $\omega\in\Cont([0,1])$ is a standard Brownian motion on the probability space $\left(\Cont([0,1]),\Filter_{\Cont([0,1])},\PP_{W}\right)$.
The probability measure $\PP_{W}$ and the probability space $\left(\Cont([0,1]),\Filter_{\Cont([0,1])},\PP_{W}\right)$ are known as Wiener measure and Wiener space, respectively.
The Brownian motion $W$ is a Gaussian process with the mean $0$ and the covariance kernel $K(t,s):=\min\{t,s\}$.
To be more precise, for any $n\in\NN$ and any $t_1,\ldots,t_n\in[0,1]$, the random vector $W_T:=\left(W_{t_1},\ldots,W_{t_n}\right)^T$ has a multivariate normal distribution with the mean vector $0$ and the covariance matrix $\vA_{K,T}:=\big(K(t_j,t_k)\big)_{j,k=1}^{n,n}$, where $T:=\left\{t_1,\ldots,t_n\right\}$.
Moreover, the min kernel $K$ is a symmetric positive definite kernel. The normed space $\Cont([0,1])$ is a Banach space.
The dual space of $\Cont([0,1])$ is $\rca([0,1])$ which is the collection of all Radon measures on $[0,1]$ endowed with total variation, and the dual bilinear product on $\Cont([0,1])\times\rca([0,1])$ has the form
\[
\left\langle\omega,\nu\right\rangle_{\Cont([0,1])}
=
\int_0^1\omega(t)\nu(\ud t),
\quad
\text{for }\omega\in\Cont([0,1])\text{ and }
\nu\in\rca([0,1]).
\]
We also have that
\[
\Mean\big(\int_0^1W_t\nu_1(\ud t)\int_0^1W_s\nu_2(\ud s)\big)=
\int_0^1\int_0^1K(t,s)\nu_1(\ud t)\nu_2(\ud s),
\]
for $\nu_1,\nu_2\in\rca([0,1])$. More details are mentioned in \cite[Section 8.1]{Stroock1993}.
\end{example}

With the same idea of Wiener measures, we introduce kernel-based probability measures on Banach spaces.
Let $\Banach$ be a real Banach space which is a real complete normed vector space.
Let $\Banach^{\ast}$ be the dual space of $\Banach$ which consists of all real bounded linear functionals on $\Banach$.
We define the dual bilinear product $\langle\omega,L\rangle_{\Banach}:=L\omega$ for $\omega\in\Banach$ and $L\in\Banach^{\ast}$.
Given any finite many functionals $L_1,\ldots,L_n\in\Banach^{\ast}$, we define
a vector operator
\[
\vL_n\omega:=
\left(\langle\omega,L_1\rangle_{\Banach},\cdots,\langle\omega,L_n\rangle_{\Banach}\right)^T,\quad\text{for }\omega\in\Banach.
\]
Clearly, $\vL_n$ is a bounded linear operator from $\Banach$ into $\RR^n$.
Let $\Ker:\Banach^{\ast}\times\Banach^{\ast}\to\RR$ be a kernel.
Based on $\Ker$ and $\vL_n$, we define a squared matrix
\begin{equation}\label{eq:CovarianceMatrix}
\vA_{\Ker,\vL_n}:=
\begin{pmatrix}
\Ker(L_1,L_1)&\cdots&\Ker(L_1,L_n)\\
\vdots&\ddots&\vdots\\
\Ker(L_n,L_1)&\cdots&\Ker(L_n,L_n)
\end{pmatrix}
\in\RR^{n\times n}.
\end{equation}
Let $\Filter_{\Banach}$ be the Borel $\sigma$-algebra of $\Banach$ which is the collection of all Borel sets in $\Banach$.
Thus, the pair $\left(\Banach,\Filter_{\Banach}\right)$ can be viewed as a measurable space.
If the measurable space $\left(\Banach,\Filter_{\Banach}\right)$ is endowed with a probability measure $\PP$ such that $\left(\Banach,\Filter_{\Banach},\PP\right)$ becomes a probability space, then we call $\PP$ a Borel probability measure on $\Banach$. For any $L\in\Banach^{\ast}$, the random variable
\begin{equation}\label{eq:GaussianField}
S_{L}(\omega):=\langle\omega,L\rangle_{\Banach},\quad\text{for }\omega\in\Banach,
\end{equation}
is well-defined on the probability space $\left(\Banach,\Filter_{\Banach},\PP\right)$.
This shows that $S$ can be viewed as a stochastic process on $\left(\Banach,\Filter_{\Banach},\PP\right)$ indexed by functionals in $\Banach^{\ast}$.
Moreover,
the stochastic process $S$ is called a \emph{Gaussian process} on the probability space $\left(\Banach,\Filter_{\Banach},\PP\right)$ with the mean $\mu\in\Banach$ and the covariance kernel $\Ker:\Banach^{\ast}\times\Banach^{\ast}\to\RR$
if for any $n\in\NN$ and any $L_1,\ldots,L_n\in\Banach^{\ast}$,
the random vector
\[
\vS_{\vL_n}:=\left(S_{L_1},\ldots,S_{L_n}\right)^T,\text{ or }
\vS_{\vL_n}(\omega):=\vL_n\omega,\text{ for }\omega\in\Banach,
\]
has a multivariate normal distribution with the mean vector $\vL_n\mu$ and the covariance matrix $\vA_{\Ker,\vL_n}$.
The covariances of Gaussian processes can be viewed as generalization of the covariances of intrinsic random functions in \cite{Matheron1973}.
By the extension of covariance kernels on dual spaces, we generalize the definition of kernel-based probability measures in \cite[Definition 2]{Ye2017PDE}.

\begin{definition}\label{def:KerProbSpace}
A Borel probability measure $\PP_{\Ker}^{\mu}$ on a Banach space $\Banach$ is said a \emph{kernel-based probability measure} on $\Banach$ with a mean $\mu\in\Banach$ and a covariance kernel $\Ker:\Banach^{\ast}\times\Banach^{\ast}\to\RR$
if the stochastic process $S$ in equation \eqref{eq:GaussianField} is a Gaussian process on
the probability space $\big(\Banach,\Filter_{\Banach},\PP_{\Ker}^{\mu}\big)$
with the mean $\mu$ and the covariance kernel $\Ker$.
The measurable space $\left(\Banach,\Filter_{\Banach}\right)$ endowed with $\PP_{\Ker}^{\mu}$ is said a \emph{kernel-based probability space} $\big(\Banach,\Filter_{\Banach},\PP_{\Ker}^{\mu}\big)$.
\end{definition}

Since $S$ is a Gaussian process on $\Banach$, then $\PP_{\Ker}^{\mu}$ is a Gaussian measure on $\Banach$.
If $\mu=0$, then $\PP_{\Ker}^{\mu}$ is a centered Gaussian measure.
If $\Var\left(S_L\right)=\Cov(S_L,S_L)=\Ker\left(L,L\right)>0$ for all $L\in\Banach^{\ast}\backslash\{0\}$, then $\PP_{\Ker}^{\mu}$ is a non-degenerate Gaussian measure.
The kernel-based probability measure $\PP_{\Ker}^{\mu}$ is a non-degenerate centered Gaussian measure if and only if
$\mu=0$ and $\Mean\abs{S_L}^2=\Ker\left(L,L\right)>0$ for all $L\in\Banach^{\ast}\backslash\{0\}$.
The normal density functions from the Gaussian measures will be used to compute the kernel-based estimators.
In this article, for the generalized interpolations, we focus on the constructions of kernel-based estimators by the covariance kernels.
Therefore, the special Gaussian measures are recalled the kernel-based probability measures to avoid the confusion of the different research areas such as approximation theory and probability theory.

\begin{proposition}\label{pro:PDK}
If $\Ker$ is a covariance kernel of a kernel-based probability measure $\PP_{\Ker}^{\mu}$ on a Banach space $\Banach$, then for any $n\in\NN$ and any $L_1,\ldots,L_n\in\Banach^{\ast}$, the matrix $\vA_{\Ker,\vL_n}$ is a symmetric positive definite matrix.
\end{proposition}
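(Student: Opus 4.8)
The plan is to recognize $\vA_{\Ker,\vL_n}$ as the covariance matrix of the Gaussian random vector $\vS_{\vL_n}$ under $\PP_{\Ker}^{\mu}$ and then read off the two required properties from the elementary structure of covariance matrices. By Definition~\ref{def:KerProbSpace}, the process $S$ is Gaussian with covariance kernel $\Ker$, which means precisely that each entry of $\vA_{\Ker,\vL_n}$ satisfies $\Ker(L_j,L_k)=\Cov(S_{L_j},S_{L_k})$. Symmetry is then immediate, since covariance is a symmetric form: $\Ker(L_j,L_k)=\Cov(S_{L_j},S_{L_k})=\Cov(S_{L_k},S_{L_j})=\Ker(L_k,L_j)$.

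For the definiteness, I would fix an arbitrary coefficient vector $\vc=(c_1,\ldots,c_n)^T\in\RR^n$ and evaluate the associated quadratic form. Expanding and using bilinearity of covariance gives
\[
\vc^T\vA_{\Ker,\vL_n}\vc
=\sum_{j,k=1}^n c_jc_k\,\Cov(S_{L_j},S_{L_k})
=\Var\Big(\sum_{j=1}^n c_jS_{L_j}\Big)\ge 0.
\]
The key simplification is that $S$ is linear in its index: since $S_L(\omega)=\langle\omega,L\rangle_{\Banach}=L\omega$ depends linearly on $L$, the linear combination collapses to a single coordinate of the process, $\sum_{j}c_jS_{L_j}=S_{L}$ with $L:=\sum_{j}c_jL_j\in\Banach^{\ast}$. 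Hence the quadratic form equals $\Var(S_L)=\Ker(L,L)$, which is nonnegative because it is the variance of a (Gaussian) random variable. This establishes that $\vA_{\Ker,\vL_n}$ is symmetric and positive in the sense that $\sum_{j,k}c_jc_k\Ker(L_j,L_k)\ge 0$, the notion inherited from the classical theory of positive definite kernels.

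The point I would handle with care is the strictness of the inequality. The identity above shows $\vc^T\vA_{\Ker,\vL_n}\vc=\Ker(L,L)$ with $L=\sum_j c_jL_j$, so one obtains a strict inequality for every $\vc\neq 0$ exactly when $L\neq 0$ forces $\Ker(L,L)>0$; this is the non-degeneracy condition $\Mean\abs{S_L}^2=\Ker(L,L)>0$ for $L\in\Banach^{\ast}\backslash\{0\}$ recorded after Definition~\ref{def:KerProbSpace}, combined with linear independence of $L_1,\ldots,L_n$ (so that $\vc\neq 0$ yields $L\neq 0$). I expect this to be the only subtle issue: without these hypotheses — for instance if two of the functionals coincide — the matrix is merely positive semidefinite, so the statement should be read with "positive definite" in the classical positive-definite-kernel sense, or under the standing non-degeneracy assumption on $\Ker$.
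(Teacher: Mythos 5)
Your proposal is correct and takes essentially the same route as the paper: both identify $\vA_{\Ker,\vL_n}$ as the covariance matrix of the multivariate normal vector $\vS_{\vL_n}$, the paper simply asserting that covariance matrices are symmetric positive definite where you verify it explicitly via the quadratic form $\vc^T\vA_{\Ker,\vL_n}\vc=\Var\big(S_L\big)=\Ker(L,L)\geq 0$ with $L=\sum_{j}c_jL_j$. Your closing caveat is also the right reading of the statement: covariance matrices are in general only positive semi-definite, so ``positive definite'' here must be understood in the kernel-theory sense of a nonnegative quadratic form (with strictness only under non-degeneracy of $\Ker$ and linear independence of the $L_j$), which is consistent with the paper's own later use of the pseudo-inverse $\vA_{\Ker,\vL_n}^{\dag}$, the smallest \emph{positive} eigenvalue $\lambda_{\min}\big(\vA_{\Ker,\vL_n}\big)$, and the explicit allowance that $\vA_{\Ker,\vL_n}$ may even vanish.
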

\begin{proof}
Based on Definition \ref{def:KerProbSpace}, the matrix $\vA_{\Ker,\vL_n}$ is the covariance matrix of the multivariate normal vector $\vS_{\vL_n}$. Since each covariance matrix is a symmetric positive definite matrix, the property of $\vA_{\Ker,\vL_n}$ can be proved.
\end{proof}

Let $K$ be a positive definite kernel on a domain $\Domain\subseteq\Rd$. Thus, for any $n\in\NN$ and any $\vx_1,\ldots,\vx_n\in\Domain$, the matrix $\vA_{K,X}:=\big(K(\vx_j,\vx_k)\big)_{j,k=1}^{n,n}$ is a symmetric positive definite matrix, where $X:=\left\{\vx_1,\ldots,\vx_n\right\}$. By \cite[Theorem~10.10]{Wendland2005}, there exists the unique reproducing kernel Hilbert space $\Hilbert_K(\Domain)$ with the reproducing kernel $K$. By the reproducing properties, the point evaluation functions $\delta_{\vx},\delta_{\vy}\in\Hilbert_K(\Domain)^{\ast}$ and the inner product $\big(\delta_{\vx},\delta_{\vy}\big)_{\Hilbert_K(\Domain)^{\ast}}=K(\vx,\vy)$ for $\vx,\vy\in\Domain$.
Thus, $\vA_{K,X}=\big(\big(\delta_{\vx_j},\delta_{\vx_k}\big)_{\Hilbert_K(\Domain)^{\ast}}\big)_{j,k=1}^{n,n}$.
Moreover, \cite[Theorem~16.7]{Wendland2005} shows that $\left(L_1,L_2\right)_{\Hilbert_K(\Domain)^{\ast}}=L_{1,\vx}L_{2,\vy}K(\vx,\vy)$ for $L_1,L_2\in\Hilbert_K(\Domain)^{\ast}$.
Thus, the kernel $\Ker(L_1,L_2):=L_{1,\vx}L_{2,\vy}K(\vx,\vy)$ is well-defined on $\Hilbert_K(\Domain)^{\ast}$.
This shows that the covariance kernels of kernel-based probability measures can be viewed as a
generalization of the classical positive definite kernels in meshfree methods.

In Example \ref{exa:BrownianMotion}, the classical Wiener measure is a kernel-based probability measure on $\Cont([0,1])$
with the mean $0$ and the covariance kernel
\[
\Ker(\nu_1,\nu_2):=\int_0^1\int_0^1\min\left\{t,s\right\}\nu_1(\ud t)\nu_2(\ud s),
\quad\text{for }\nu_1,\nu_2\in\rca([0,1]).
\]
We will give another example of kernel-based probability measures on Sobolev spaces in
\cite{Ye2016PDK,Ye2017Kriging,Ye2017PDE}.

%
%

\begin{example}\label{exa:PDK-Sobolev}
Suppose that $\Domain\subseteq\Rd$ is a regular and compact domain.
Let $\Hilbert^m(\Domain)$ be the real $\Leb_2$-based Sobolev space of the order $m>d/2$,
that is,
\[
\Hilbert^m(\Domain):=\left\{
\omega\in\Leb_2(\Domain)
:D^{\valpha}\omega\in\Leb_2(\Domain)\text{ for }\valpha\in\NN_0^d\text{ and }\abs{\valpha}\leq m
\right\},
\]
where $D^{\valpha}$ is the partial derivative of the order $\valpha$.
Suppose that $\mu\in\Hilbert^m(\Domain)$.
Let $K$ be a symmetric strictly positive definite kernel on $\Domain$.
Suppose that $K\in\Cont^{2m}(\Domain\times\Domain)$ and $K$ has the $2m$th partial derivatives with Lipschitz conditions.
By \cite[Theorem~6.1]{Ye2016PDK} and \cite[Theorem 1]{Ye2017PDE},
there exists a kernel-based probability measure $\PP_{\Ker}^{\mu}$ on $\Hilbert^m(\Domain)$
with the mean $\mu$ and the covariance kernel
\[
\Ker(L_1,L_2):=L_{1,\vx}L_{2,\vy}K(\vx,\vy),
\quad\text{for }L_1,L_2\in\Hilbert^m(\Domain)^{\ast},
\]
where the notations $L_{\vx}$ and $L_{\vy}$
represent an operator $L$ acting on the first argument $\vx$ and
the second argument $\vy$ of the kernel $K(\vx,\vy)$, respectively.
Specially, we define a linear functional
\[
\vartheta^{\valpha}(\omega):=
\int_{\Domain}\left(D^{\valpha}\omega\right)(\vx)\varphi(\vx)\ud\vx,
\quad\text{for }\omega\in\Hilbert^m(\Domain),
\]
where $\abs{\valpha}\leq m$ and $\varphi\in\Cont_0^{\infty}(\Domain)$.
Thus, we have that
$\vartheta^{\valpha}\in\Hilbert^m(\Domain)^{\ast}$.
This shows that
\[
\Ker\big(\vartheta^{\valpha_1},\vartheta^{\valpha_2}\big)=
\int_{\Domain}\int_{\Domain}D_{\vx}^{\valpha_1}D_{\vy}^{\valpha_2}K(\vx,\vy)\varphi(\vx)\varphi(\vy)\ud\vx\ud\vy,
\]
for $\abs{\valpha_1},\abs{\valpha_2}\leq m$.
Moreover, Sobolev imbedding theorem guarantees that $\delta_{\vx}\circ D^{\valpha}\in\Hilbert^m(\Domain)^{\ast}$ for $\vx\in\Domain$ and $\abs{\valpha}<m-d/2$.
This shows that
\[
\Ker\big(\delta_{\vz_1}\circ D^{\valpha_1},\delta_{\vz_2}\circ D^{\valpha_2}\big)=
D_{\vx}^{\valpha_1}D_{\vy}^{\valpha_2}K(\vx,\vy)|_{\vx=\vz_1,\vy=\vz_2},
\]
for $\vz_1,\vz_2\in\Domain$ and $\abs{\valpha_1},\abs{\valpha_2}<m-d/2$.
\end{example}

We already know that the reproducing kernel Hilbert spaces of Sobolev-spline kernels are equivalent to the Sobolev space.
For a general reproducing kernel $K\in\Hilbert^{2m}(\Domain\times\Domain)$ for $m>d/2$, \cite[Theorem 3.1]{CialencoFasshauerYe2012} assures that there exists the kernel-based probability measure $\PP_{\Ker}^{\mu}$ on $\Hilbert_K(\Domain)$ with the mean $\mu\in\Hilbert_K(\Domain)$ and the covariance kernel
\[
\Ker\left(L_1,L_2\right):=\int_{\Domain}L_{1,\vx}K(\vx,\vt)L_{2,\vy}K(\vy,\vt)\ud\vt,\quad\text{for }L_1,L_2\in\Hilbert_K(\Domain)^{\ast}.
\]
Thus, the kernel-based probability measures on Banach spaces in Definition \ref{def:KerProbSpace} can be viewed as a generalization of original kernel-based probability measures on reproducing kernel Hilbert spaces
in \cite{CialencoFasshauerYe2012,FasshauerYe2012SPDEMCQMC,FasshauerYe2012Bonn}
or Sobolev spaces in \cite{Ye2016PDK,Ye2017Kriging,Ye2017PDE}.

\begin{proposition}\label{pro:SL-L}
If $\PP_{\Ker}^{\mu}$ is a kernel-based probability measure on a Banach space $\Banach$,
then for any $L\in\Banach^{\ast}$, the random variable $S_L$ satisfies that
\[
\Mean\abs{S_L}^2\leq C\norm{L}_{\Banach^{\ast}}^2,
\]
where the constant $C\geq0$ is independent of $L$.
\end{proposition}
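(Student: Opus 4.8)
The plan is to reduce the estimate to the finiteness of the second moment of the Banach norm under $\PP_{\Ker}^{\mu}$ and then invoke Fernique's theorem. First I would exploit the very definition of the operator norm: for every $\omega\in\Banach$ and every $L\in\Banach^{\ast}$,
\[
\abs{S_L(\omega)}=\abs{\langle\omega,L\rangle_{\Banach}}\leq\norm{L}_{\Banach^{\ast}}\norm{\omega}_{\Banach}.
\]
Squaring and integrating this pointwise inequality against $\PP_{\Ker}^{\mu}$, using monotonicity of the integral, yields
\[
\Mean\abs{S_L}^2=\int_{\Banach}\abs{L\omega}^2\,\PP_{\Ker}^{\mu}(\ud\omega)\leq\norm{L}_{\Banach^{\ast}}^2\int_{\Banach}\norm{\omega}_{\Banach}^2\,\PP_{\Ker}^{\mu}(\ud\omega),
\]
so the claimed bound holds with the $L$-independent constant
\[
C:=\int_{\Banach}\norm{\omega}_{\Banach}^2\,\PP_{\Ker}^{\mu}(\ud\omega)=\Mean\norm{\omega}_{\Banach}^2,
\]
provided this quantity is finite. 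The map $\omega\mapsto\norm{\omega}_{\Banach}$ is continuous, hence $\Filter_{\Banach}$-measurable, so the integral is well defined in $[0,\infty]$. Note that this single operator-norm estimate simultaneously controls both the variance $\Ker(L,L)=\Var(S_L)$ and the mean $(L\mu)^2$, so there is no need to split $\Mean\abs{S_L}^2$ into its two contributions.

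The remaining and genuinely substantive step is to show $C<\infty$. Since the coordinate process $S$ is a Gaussian process under $\PP_{\Ker}^{\mu}$ by Definition \ref{def:KerProbSpace}, the measure $\PP_{\Ker}^{\mu}$ is a Gaussian measure on $\Banach$, and I would appeal to Fernique's theorem for Gaussian measures on Banach spaces, as developed in \cite[Chapter VIII]{Stroock1993}: there exists $\alpha>0$ such that $\int_{\Banach}e^{\alpha\norm{\omega}_{\Banach}^2}\,\PP_{\Ker}^{\mu}(\ud\omega)<\infty$. Exponential integrability of the squared norm forces every polynomial moment of the norm to be finite; in particular the second moment $C=\Mean\norm{\omega}_{\Banach}^2$ is finite, which completes the argument.

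If one prefers Fernique's theorem in its standard centered form, the non-centered case reduces to it by a translation: the pushforward of $\PP_{\Ker}^{\mu}$ under the shift $\omega\mapsto\omega-\mu$ is the centered Gaussian measure $\PP_{\Ker}^{0}$, and the elementary inequality $\norm{\omega}_{\Banach}^2\leq 2\norm{\omega-\mu}_{\Banach}^2+2\norm{\mu}_{\Banach}^2$ transfers the finite second moment from $\PP_{\Ker}^{0}$ back to $\PP_{\Ker}^{\mu}$. The main obstacle is exactly the integrability statement $C<\infty$: the operator-norm estimate and the integration of the pointwise inequality are routine, whereas the finiteness of Gaussian moments of the Banach norm is the nontrivial content supplied by Fernique's theorem.
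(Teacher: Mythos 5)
Your proof is correct and follows essentially the same route as the paper's: bound $\abs{S_L(\omega)}^2\leq\norm{L}_{\Banach^{\ast}}^2\norm{\omega}_{\Banach}^2$ pointwise, integrate against $\PP_{\Ker}^{\mu}$, and take $C:=\int_{\Banach}\norm{\omega}_{\Banach}^2\PP_{\Ker}^{\mu}(\ud\omega)$, which is finite by Fernique's theorem since $\PP_{\Ker}^{\mu}$ is a Gaussian measure. Your additional remarks on measurability and on reducing the non-centered case to the centered one via translation are fine elaborations of details the paper leaves implicit.
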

\begin{proof}
Based on the Fernique's theorem, we have the nonnegative constant
\[
C:=\int_{\Banach}\norm{\omega}_{\Banach}^2\PP_{\Ker}^{\mu}(\ud\omega)<\infty.
\]
Equation \eqref{eq:GaussianField} shows that
\begin{equation}\label{eq:SL-L-1}
\abs{S_L(\omega)}^2
=\abs{\langle\omega,L\rangle_{\Banach}}^2\leq\norm{L}_{\Banach^{\ast}}^2\norm{\omega}_{\Banach}^2,
\quad
\text{for all }\omega\in\Banach.
\end{equation}
Integrating the both sides of equation \eqref{eq:SL-L-1}, we complete the proof.
\end{proof}

Let $\Leb_2(\Banach,\Filter_{\Banach},\PP_{\Ker}^{\mu})$ be a collection of all random variables on the kernel-based probability space $\big(\Banach,\Filter_{\Banach},\PP_{\Ker}^{\mu}\big)$ with finite second moments.
Since $\Mean\abs{S_L}^2=\left(L\mu\right)^2+\Ker(L,L)<\infty$ for $L\in\Banach^{\ast}$, we have that
$S_L\in\Leb_2(\Banach,\Filter_{\Banach},\PP_{\Ker}^{\mu})$.
Let $\Hilbert_S$ be the competition of the linear space $\Span\left\{S_L:L\in\Banach^{\ast}\right\}$
by the second-moment norm.
Clearly $\Hilbert_S$ is a closed subspace of $\Leb_2(\Banach,\Filter_{\Banach},\PP_{\Ker}^{\mu})$.
Thus, $\Hilbert_S$ is a Hilbert space endowed with the inner product
\[
\left(U,V\right)_{\Hilbert_S}=\Mean\left(UV\right),\quad
\text{for }U,V\in\Hilbert_S.
\]

\begin{proposition}\label{pro:SL-L-imbedding}
If $\Ker(L,L)>0$ for all $L\in\Banach^{\ast}\backslash\{0\}$, then the dual space $\Banach^{\ast}$ is imbedded in the Hilbert space $\Hilbert_S$.
\end{proposition}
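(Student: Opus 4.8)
The plan is to realize the imbedding as an explicit bounded linear injection, since to say that $\Banach^{\ast}$ is imbedded in $\Hilbert_S$ is precisely to produce such a map. The natural candidate is $\iota\colon\Banach^{\ast}\to\Hilbert_S$, $\iota(L):=S_L$. I would first record that this is well-defined into $\Hilbert_S$, because each $S_L$ belongs to $\Span\left\{S_L:L\in\Banach^{\ast}\right\}$ and hence to its completion $\Hilbert_S$; linearity of $\iota$ is immediate from the bilinearity of the dual product, as $S_{aL_1+bL_2}(\omega)=\langle\omega,aL_1+bL_2\rangle_{\Banach}=aS_{L_1}(\omega)+bS_{L_2}(\omega)$ for every $\omega\in\Banach$.

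Next I would check continuity of $\iota$. Since the norm on $\Hilbert_S$ is the second-moment norm, $\norm{U}_{\Hilbert_S}^2=\Mean(U^2)$, Proposition \ref{pro:SL-L} applies directly to give
\[
\norm{\iota(L)}_{\Hilbert_S}^2=\Mean\abs{S_L}^2\leq C\norm{L}_{\Banach^{\ast}}^2,
\]
with $C\geq0$ independent of $L$. Thus $\iota$ is bounded with $\norm{\iota}\leq\sqrt{C}$, hence continuous.

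The remaining step, and the only one that uses the hypothesis, is injectivity. Using the decomposition $\Mean\abs{S_L}^2=(L\mu)^2+\Ker(L,L)$ already noted above, I would argue that $\iota(L)=0$ in $\Hilbert_S$ forces $\Mean\abs{S_L}^2=0$, and since the two summands are nonnegative this in turn forces $\Ker(L,L)=0$. The standing assumption $\Ker(L,L)>0$ for all $L\in\Banach^{\ast}\backslash\{0\}$ then leaves only $L=0$, so $\iota$ is injective. Putting the three observations together, $\iota$ is a continuous linear injection of $\Banach^{\ast}$ into $\Hilbert_S$, which is the desired imbedding.

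I expect no real obstacle: continuity is a verbatim application of Proposition \ref{pro:SL-L}, and the only delicate point is that injectivity hinges entirely on the nondegeneracy assumption $\Ker(L,L)>0$. Were this assumption dropped, a nonzero $L$ could satisfy $\Ker(L,L)=0$ and (in the centered case) $S_L=0$ almost surely, so that $\iota$ would fail to be injective and the imbedding would break down; it is precisely this positivity that rules out such a collapse.
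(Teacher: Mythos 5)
Your proposal is correct and follows essentially the same route as the paper's proof: continuity comes verbatim from Proposition \ref{pro:SL-L}, and injectivity from the identity $\Mean\abs{S_L}^2=(L\mu)^2+\Ker(L,L)>0$ for $L\neq0$ (the paper phrases this as $S_{L_1}\neq S_{L_2}$ for $L_1\neq L_2$, which is the same computation applied to $L:=L_1-L_2$). If anything, your write-up is slightly cleaner, since the paper loosely calls the map ``bijective'' when only injectivity is established and needed.
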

\begin{proof}
By equation \eqref{eq:GaussianField},
the Gaussian process $S$ can be viewed as a linear map from $\Banach^{\ast}$ into $\Hilbert_S$.
By Proposition \ref{pro:SL-L}, the map $S$ is continuous.

If we verify that $S$ is a bijective map, then the proof is complete.
We take any $L_1,L_2\in\Banach^{\ast}$ such that $L_1\neq L_2$.
Let $L:=L_1-L_2$. Then, $L\neq 0$. So, $\Mean\abs{S_{L_1}-S_{L_2}}^2=\Mean\abs{S_L}^2=\left(L\mu\right)^2+\Ker(L,L)>0$.
This shows that $S_{L_1}\neq S_{L_2}$. Therefore, the map $S$ is bijective.
\end{proof}

In this article, we only look at the basic theorems of kernel-based probability measures.
Another theorems of kernel-based probability measures can be obtained by the same methods as shown in
\cite[Chapter 4]{BerlinetThomas2004} and \cite[Chapter VIII]{Stroock1993}.
The kernel-based probability measure will be viewed as a numerical tool to renew the kernel-based approximation methods as follows.

\section{Generalized Interpolations by Kernel-based Probability Measures}\label{sec:MeshfreeApprox}

In this section, we show kernel-based approximation methods by kernel-based probability measures.
To be more precise, we construct and analyze the kernel-based estimators conditioned on the given data by the kernel-based probability measures.

In approximation problems, for an unknown element $u$ in the Banach space $\Banach$ and a given functional $L$ in the dual space $\Banach^{\ast}$,
we want to estimate the value $Lu$ conditioned on the given data evaluated by $u$.
The element $u$ can be viewed as an exact solution in $\Banach$. But the exact value $Lu$ is always unknown or uncomputable.
Thus, we need to approximate $Lu$ with a numerical method.

Let $\PP_{\Ker}^{\mu}$ be a kernel-based probability measure on $\Banach$ with a mean $\mu\in\Banach$
and a covariance kernel $\Ker:\Banach^{\ast}\times\Banach^{\ast}\to\RR$ in Definition \ref{def:KerProbSpace}.
The kernel-based probability measure $\PP_{\Ker}^{\mu}$
provides a numerical tool to compute kernel-based estimators.
To be more precise, we will construct the kernel-based estimators to approximate $Lu$ by the covariance kernel $\Ker$.
For convenience, the mean $\mu$ and the covariance kernel $\Ker$ are always fixed in this section.

We look at the collection $\Aset$ of all elements in $\Banach$ satisfying the generalized interpolation conditions.
Roughly, $\Aset$ is a certain set of ``conditions'' known to occur.
We call the subset $\Aset$ an \emph{interpolation event}.
We always think that $u\in\Aset$.
The kernel-based estimators will be computed from the average over $\Aset$ measured by $\PP_{\Ker}^{\mu}$.
Specially, we illustrate two examples of $\Aset$ based on non-noise data or noisy data.

\begin{example}\label{exa:Hermite-Birkhoff}
The non-noise data
\[
(L_1,f_1),\ldots,(L_n,f_n)\in\Banach^{\ast}\times\RR,
\]
are evaluated by the element $u\in\Banach$, that is,
\[
\langle u,L_1\rangle_{\Banach}=f_1,\ldots,\langle u,L_n\rangle_{\Banach}=f_n.
\]
For the interpolation, we define the interpolation event
\[
\Aset_n:=\left\{\omega\in\Banach:\vL_n\omega=\vf_n\right\},
\]
where $\vL_n:=\left(L_1,\cdots,L_n\right)^T$ and $\vf_n:=\left(f_1,\cdots,f_n\right)^T$.
It is obvious that $u\in\Aset_n$ and $\Aset_n$ includes all elements in $\Banach$ satisfying the generalized interpolation conditions.
According to equation \eqref{eq:GaussianField}, $\Aset_n$ can be rewritten by the Gaussian process $S$, that is,
\begin{equation}\label{eq:Hermite-Birkhoff}
\Aset_n=\left\{\omega\in\Banach:\vS_{\vL_n}(\omega)=\vf_n\right\}.
\end{equation}
Since $\vL_n$ is linear and continuous on $\Banach$, the set $\Aset_n$ is a closed affine set of $\Banach$.
Thus $\Aset_n$ belongs to the Borel $\sigma$-algebra $\Filter_{\Banach}$.
\end{example}

Let $B_n(\vz,r):=\left\{\vv\in\RR^n:\norm{\vv-\vz}_2\leq r\right\}$ be the closed ball centered at $\vz\in\RR^n$ with the radius $r>0$. Thus, we have that $B_n(\vz,r)=\vz+B_n(0,r)$.

\begin{example}\label{exa:noise-Hermite-Birkhoff}
The noisy data
\[
\big(L_1,\hat{f}_1\big),\ldots,\big(L_n,\hat{f}_n\big)\in\Banach^{\ast}\times\RR,
\]
are evaluated by the element $u\in\Banach$ for the noise margin $\epsilon_n>0$, that is,
\[
\langle u,L_1\rangle_{\Banach}=\hat{f}_1+\xi_1,\ldots,
\langle u,L_n\rangle_{\Banach}=\hat{f}_n+\xi_n,
\]
where
\[
\xi_1^2+\cdots+\xi_n^2\leq\epsilon_n^2.
\]
To fit the noisy data, we define the interpolation event
\[
\Aset_n^{\epsilon_n}:=\left\{\omega\in\Banach:\vL_n\omega\in B_n\big(\hatvf_n,\epsilon_n\big)\right\},
\]
where $\vL_n:=\left(L_1,\cdots,L_n\right)^T$ and
$\hatvf_n:=\big(\hat{f}_1,\cdots,\hat{f}_n\big)^T$.
We observe that $u\in\Aset_n^{\epsilon_n}$ and $\Aset_n^{\epsilon_n}$ includes all elements in $\Banach$ interpolating the noisy data in the confidence region.
Comparing with Example \ref{exa:Hermite-Birkhoff}, we find that
$\norm{\vf_n-\hatvf_n}_2\leq\epsilon_n$.
This shows that $\Aset_n\subseteq\Aset_n^{\epsilon_n}$ and $\Aset_n^{\epsilon_n}$ is closed to $\Aset_n$ when $\epsilon_n\to0$.
Moreover, $\Aset_n^{\epsilon_n}$ can be rewritten as
\begin{equation}\label{eq:noise-Hermite-Birkhoff}
\Aset_n^{\epsilon_n}=\left\{\omega\in\Banach:\vS_{\vL_n}(\omega)\in B_n\big(\hatvf_n,\epsilon_n\big)\right\}.
\end{equation}
Since $B_n\big(\hatvf_n,\epsilon_n\big)$ is a closed ball of $\RR^n$, we have that $\Aset_n^{\epsilon_n}\in\Filter_{\Banach}$.
\end{example}

In the beginning, we illustrate the notations.
Given finite many $L_1,\ldots,L_n\in\Banach^{\ast}$, we let $\vL_n:=\left(L_1,\cdots,L_n\right)^T$.
All non-noise data values $\vf_n$ or noisy data values $\hatvf_n$ are evaluated by the exact solution $u\in\Banach$ acting on $\vL_n$ same as Examples \ref{exa:Hermite-Birkhoff} or \ref{exa:noise-Hermite-Birkhoff}, that is, $\vf_n:=\vL_nu$ or $\hatvf_n:=\vL_nu+\vxi_n$ where $\vxi_n\in B_n\big(0,\epsilon_n\big)$.
Moreover, we define a map $\vrho_n$ from $\cup_{m\geq n}\RR^m$ into $\RR^n$ as
\[
\vrho_n(\vzeta):=\vz,\quad
\text{for }\vzeta:=\begin{pmatrix}\vz\\\ve\end{pmatrix}\in\RR^m,~\vz\in\RR^n,~\ve\in\RR^{m-n},\text{ and }m\geq n.
\]
This shows that $\vf_n=\rho_{n}\big(\vf_{n+1}\big)$.
But $\hatvf_{n}$ may not be equal to $\rho_{n}\big(\hatvf_{n+1}\big)$.

The Gaussian process $S$ is defined on $\big(\Banach,\Filter_{\Banach},\PP_{\Ker}^{\mu}\big)$ in equation \eqref{eq:GaussianField}.
Thus, we have the multivariate normal random variables $S_L$ and $\vS_{\vL_n}$.
Let $p_{L,\vL_n}(z,\vz)$ and $p_{\vL_n}(\vz)$ be the joint probability density functions of $\left(S_L,\vS_{\vL_n}\right)$ and $\vS_{\vL_n}$, respectively.
Let $p_{L|\vL_n}(z|\vz)$ be the conditional probability density function of $S_L$ given $\vS_{\vL_n}$.
We define a vector function
\[
\vb_{\Ker,\vL_n}(L):=
\left(\Ker(L,L_1),\cdots,\Ker(L,L_n)\right)^T,
\quad
\text{for }L\in\Banach^{\ast}.
\]
This shows that $\vb_{\Ker,\vL_n}$ is a map from $\Banach^{\ast}$ into $\RR^n$ and
$\vb_{\Ker,\vL_n}(L)=\Mean(S_L\vS_{\vL_n})-\Mean(S_L)\Mean(\vS_{\vL_n})$.
Clearly $\vb_{\Ker,\vL_n}$ composes of the kernel basis $\Ker(\cdot,L_1),\ldots,\Ker(\cdot,L_n)$.
The covariance matrix $\vA_{\Ker,\vL_n}$ of $\vS_{\vL_n}$ is defined in equation \eqref{eq:CovarianceMatrix}.
Proposition~\ref{pro:PDK} shows that $\vA_{\Ker,\vL_n}$ is a symmetric positive definite matrix.
This assures that the \emph{pseudo inverse} $\vA_{\Ker,\vL_n}^\dag$ of $\vA_{\Ker,\vL_n}$ is well-defined, that is,
\[
\vA_{\Ker,\vL_n}\vA_{\Ker,\vL_n}^{\dag}\vA_{\Ker,\vL_n}=\vA_{\Ker,\vL_n}\text{ and }
\vA_{\Ker,\vL_n}^{\dag}\vA_{\Ker,\vL_n}\vA_{\Ker,\vL_n}^{\dag}=\vA_{\Ker,\vL_n}^{\dag}.
\]
If $\vA_{\Ker,\vL_n}=0$, then $\vA_{\Ker,\vL_n}^{\dag}=0$. Since the kernel-based estimators are mainly dependent of $\vA_{\Ker,\vL_n}$ as follows, the algorithms and theorems of kernel-based estimators will be trivial when $\vA_{\Ker,\vL_n}=0$. To simplify the discussions, we suppose that $\vA_{\Ker,\vL_n}$ is not equal to $0$ in this section.
Let
$\lambda_{\min}\big(\vA_{\Ker,\vL_n}\big)$ be the smallest positive eigenvalue of $\vA_{\Ker,\vL_n}$.

\subsection{Representations of Kernel-based Estimators}\label{sec:KerAppr}

Given the interpolation event $\Aset\in\Filter_{\Banach}$, we know that the unknown solution $u\in\Aset$.
Each $\omega\in\Aset$ could be a solution. To avoid missing any element in $\Aset$, the estimator at the fixed $L\in\Banach^{\ast}$ is evaluated by the average over $\Aset$.
The kernel-based probability measure $\PP_{\Ker}^{\mu}$
provides a numerical tool to compute the average to estimate the value $Lu$.
When $\PP_{\Ker}^{\mu}(\Aset)>0$, the average can be written as
\[
\frac{1}{\PP_{\Ker}^{\mu}(\Aset)}\int_{\Aset}\langle\omega,L\rangle_{\Banach}\PP_{\Ker}^{\mu}(\ud\omega)
=\int_{\Banach}\langle\omega,L\rangle_{\Banach}\PP_{\Ker}^{\mu}(\ud\omega|\Aset).
\]
Therefore,
the \emph{kernel-based estimator} $s_{\Aset}(L)$ at $L$ conditioned on $\Aset$ is defined by
\begin{equation}\label{eq:kerappr-1}
s_{\Aset}(L):=\int_{\Banach}\langle\omega,L\rangle_{\Banach}\PP_{\Ker}^{\mu}(\ud\omega|\Aset).
\end{equation}
Clearly, $s_{\Aset}(L)$ is still well-defined when $\PP_{\Ker}^{\mu}(\Aset)=0$.
Using the Gaussian process $S$ in equation \eqref{eq:GaussianField}, the kernel-based estimator $s_{\Aset}(L)$ can be rewritten by the conditional mean.

\begin{proposition}\label{pro:kerappr}
If the interpolation event $\Aset\in\Filter_{\Banach}$, then for any $L\in\Banach^{\ast}$, the kernel-based estimator $s_{\Aset}(L)$ can be written as
\begin{equation}\label{eq:kerappr-2}
s_{\Aset}(L)=\Mean\left(S_L|\Aset\right).
\end{equation}
\end{proposition}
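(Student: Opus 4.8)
The plan is to read Proposition \ref{pro:kerappr} as a change of notation: once the random variable $S_L$ is identified with the dual pairing, the defining integral \eqref{eq:kerappr-1} becomes literally the conditional mean of $S_L$. By equation \eqref{eq:GaussianField}, $S_L(\omega)=\langle\omega,L\rangle_{\Banach}$ for every $\omega\in\Banach$, so the integrand appearing in \eqref{eq:kerappr-1} is exactly $S_L$. Substituting this into \eqref{eq:kerappr-1} gives $s_{\Aset}(L)=\int_{\Banach}S_L(\omega)\,\PP_{\Ker}^{\mu}(\ud\omega|\Aset)$, and the right-hand side is by definition the conditional mean $\Mean(S_L|\Aset)$ of $S_L$ with respect to the conditional probability measure $\PP_{\Ker}^{\mu}(\cdot|\Aset)$. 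Thus the first and essentially only step is this substitution.

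Before carrying it out I would confirm that $\Mean(S_L|\Aset)$ is well-defined, i.e. that $S_L$ is integrable under $\PP_{\Ker}^{\mu}$. This is supplied by Proposition \ref{pro:SL-L}: since $\Mean\abs{S_L}^2\le C\norm{L}_{\Banach^{\ast}}^2<\infty$, we have $S_L\in\Leb_2(\Banach,\Filter_{\Banach},\PP_{\Ker}^{\mu})\subseteq\Leb_1(\Banach,\Filter_{\Banach},\PP_{\Ker}^{\mu})$, so both the defining integral and the conditional mean exist and are finite.

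The one delicate point, and where I expect the main obstacle, is the meaning of $\PP_{\Ker}^{\mu}(\cdot|\Aset)$ when $\PP_{\Ker}^{\mu}(\Aset)=0$; this is exactly the situation of the affine interpolation event $\Aset_n$ in Example \ref{exa:Hermite-Birkhoff}, which typically carries null Gaussian mass. When $\PP_{\Ker}^{\mu}(\Aset)>0$ the elementary definition $\PP_{\Ker}^{\mu}(B|\Aset)=\PP_{\Ker}^{\mu}(B\cap\Aset)/\PP_{\Ker}^{\mu}(\Aset)$ applies and the identity reduces to the standard fact that $\int_{\Banach}S_L\,\PP_{\Ker}^{\mu}(\ud\omega|\Aset)=\PP_{\Ker}^{\mu}(\Aset)^{-1}\int_{\Aset}S_L\,\PP_{\Ker}^{\mu}(\ud\omega)$ is precisely the conditional mean given the event $\Aset$. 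When $\PP_{\Ker}^{\mu}(\Aset)=0$, I would interpret $\PP_{\Ker}^{\mu}(\cdot|\Aset)$ through the regular conditional distribution of $\PP_{\Ker}^{\mu}$ given the finite-dimensional statistic $\vS_{\vL_n}$, whose disintegration exists because $\vS_{\vL_n}$ is Gaussian with the density $p_{\vL_n}$; the event $\Aset$ is then a fiber of $\vS_{\vL_n}$, and $\Mean(S_L|\Aset)$ is the value on that fiber of the regular conditional expectation $\Mean(S_L|\vS_{\vL_n})$, computed from the conditional density $p_{L|\vL_n}(z|\vz)$. The residual work is to check that this disintegration-based reading is consistent with the defining integral \eqref{eq:kerappr-1} and recovers the positive-mass formula in the limit, which is a standard property of Gaussian conditioning.
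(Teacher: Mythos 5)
Your proof is correct and takes essentially the same route as the paper: the paper's entire argument is the one-line substitution of $S_L(\omega)=\langle\omega,L\rangle_{\Banach}$ from equation \eqref{eq:GaussianField} into the defining integral \eqref{eq:kerappr-1}, which is exactly your main step. Your additional checks (integrability via Proposition \ref{pro:SL-L}, and the disintegration reading of $\PP_{\Ker}^{\mu}(\cdot|\Aset)$ when $\PP_{\Ker}^{\mu}(\Aset)=0$) are sound extra diligence that the paper simply asserts without proof, but they do not change the approach.
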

\begin{proof}
Since $S_{L}(\omega)=\langle \omega,L\rangle_{\Banach}$ for $\omega\in\Banach$, equation \eqref{eq:kerappr-1} can be rewritten as
\[
s_{\Aset}(L)=\int_{\Banach}S_L(\omega)\PP_{\Ker}^{\mu}(\ud\omega|\Aset)
=\Mean\left(S_L|\Aset\right).
\]
\end{proof}

Next, we look at explicit formulas of kernel-based estimators conditioned on the special interpolation events $\Aset_n$ and $\Aset_n^{\epsilon_n}$ in Examples \ref{exa:Hermite-Birkhoff} and \ref{exa:noise-Hermite-Birkhoff}, respectively.

\begin{theorem}\label{thm:kerappr-inter}
If the interpolation event $\Aset_n$ based on the non-noise data $(\vL_n,\vf_n)$ is the same as Example \ref{exa:Hermite-Birkhoff}, then for any $L\in\Banach^{\ast}$, the kernel-based estimator $s_{\Aset_n}(L)$
can be written as
\begin{equation}\label{eq:kerappr-3}
s_{\Aset_n}(L)=L\mu+\vb_{\Ker,\vL_n}(L)^T\vA_{\Ker,\vL_n}^{\dag}(\vf_n-\vL_n\mu).
\end{equation}
\end{theorem}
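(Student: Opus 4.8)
The plan is to reduce the statement to a finite-dimensional Gaussian conditioning computation. By Proposition~\ref{pro:kerappr} we have $s_{\Aset_n}(L) = \Mean(S_L \mid \Aset_n)$, and by the reformulation \eqref{eq:Hermite-Birkhoff} the interpolation event is $\Aset_n = \{\omega \in \Banach : \vS_{\vL_n}(\omega) = \vf_n\}$. Hence I would read the right-hand side as the conditional mean of the real random variable $S_L$ given the value $\vS_{\vL_n} = \vf_n$. Applying the Gaussian-process property of Definition~\ref{def:KerProbSpace} to the finitely many functionals $L, L_1, \ldots, L_n$, the augmented vector $(S_L, \vS_{\vL_n}^T)^T$ is multivariate normal with mean $(L\mu, (\vL_n\mu)^T)^T$ and a covariance block matrix assembled from $\Ker(L,L)$, the cross-covariance $\vb_{\Ker,\vL_n}(L) = \Cov(S_L, \vS_{\vL_n})$, and the matrix $\vA_{\Ker,\vL_n}$. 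Everything then depends only on these finite-dimensional Gaussian data.

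The key step is to decorrelate. I would introduce the coefficient vector $\vc := \vA_{\Ker,\vL_n}^{\dag}\,\vb_{\Ker,\vL_n}(L)$ and the auxiliary variable $Z := S_L - \vc^T \vS_{\vL_n}$. A direct computation gives
\[
\Cov(Z, \vS_{\vL_n}) = \vb_{\Ker,\vL_n}(L)^T - \vc^T \vA_{\Ker,\vL_n} = \vb_{\Ker,\vL_n}(L)^T - \vb_{\Ker,\vL_n}(L)^T \vA_{\Ker,\vL_n}^{\dag}\vA_{\Ker,\vL_n},
\]
which vanishes once we know that $\vb_{\Ker,\vL_n}(L)$ lies in $\range(\vA_{\Ker,\vL_n})$, since then the orthogonal projection $\vA_{\Ker,\vL_n}^{\dag}\vA_{\Ker,\vL_n}$ fixes it. Being uncorrelated and jointly Gaussian, $Z$ and $\vS_{\vL_n}$ are independent, so conditioning on $\vS_{\vL_n} = \vf_n$ leaves $\Mean(Z)$ unchanged while turning $\vc^T\vS_{\vL_n}$ into the deterministic $\vc^T\vf_n$. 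Therefore
\[
s_{\Aset_n}(L) = \Mean(Z) + \vc^T\vf_n = \big(L\mu - \vc^T\vL_n\mu\big) + \vc^T\vf_n = L\mu + \vc^T(\vf_n - \vL_n\mu),
\]
and substituting $\vc^T = \vb_{\Ker,\vL_n}(L)^T\vA_{\Ker,\vL_n}^{\dag}$, which uses the symmetry of the pseudo-inverse, recovers \eqref{eq:kerappr-3}.

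The main obstacle is the possible singularity of $\vA_{\Ker,\vL_n}$, which is only guaranteed positive semidefinite and thus forces the use of the pseudo-inverse rather than a genuine inverse. Two points need care. First, the range condition $\vb_{\Ker,\vL_n}(L) \in \range(\vA_{\Ker,\vL_n})$ must be justified; I would obtain it from positive semidefiniteness of the full covariance matrix of $(S_L, \vS_{\vL_n}^T)^T$, whose generalized Schur-complement characterization forces the cross-covariance into the range of the diagonal block $\vA_{\Ker,\vL_n}$. Second, when $\vA_{\Ker,\vL_n}$ is singular the event $\Aset_n$ typically has $\PP_{\Ker}^{\mu}(\Aset_n) = 0$, so $s_{\Aset_n}(L)$ cannot be read off from the ratio implicit in \eqref{eq:kerappr-1}; here I would lean on the regular conditional expectation, equivalently on the Gaussian conditional density $p_{L|\vL_n}(z \mid \vf_n)$, whose mean is exactly the decorrelation formula above. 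The decorrelation argument stays robust under this degeneracy because $\vb_{\Ker,\vL_n}(L)^T\vA_{\Ker,\vL_n}^{\dag}$ depends on $\vf_n - \vL_n\mu$ only through its orthogonal projection onto $\range(\vA_{\Ker,\vL_n})$.
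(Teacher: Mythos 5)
Your proof is correct, and it follows the paper's skeleton---Proposition \ref{pro:kerappr} plus the rewriting \eqref{eq:Hermite-Birkhoff} reduce everything to the conditional mean $\Mean\left(S_L\mid \vS_{\vL_n}=\vf_n\right)$ of a jointly Gaussian vector---but it executes the key step by a genuinely different route. The paper simply quotes the Gaussian conditioning formula: it writes down the conditional density $p_{L|\vL_n}$ in \eqref{eq:condpdf} with mean \eqref{eq:mean} and variance \eqref{eq:var}, pseudo-inverse already in place, and integrates. You instead derive that formula: you decorrelate with $Z:=S_L-\vc^T\vS_{\vL_n}$, $\vc:=\vA_{\Ker,\vL_n}^{\dag}\vb_{\Ker,\vL_n}(L)$, upgrade uncorrelatedness to independence via joint Gaussianity, and---the step the paper never addresses---justify the range condition $\vb_{\Ker,\vL_n}(L)\in\range\big(\vA_{\Ker,\vL_n}\big)$ from positive semidefiniteness of the full block covariance matrix, which is exactly what makes the $\dag$-formula legitimate when $\vA_{\Ker,\vL_n}$ is singular (note that Proposition \ref{pro:PDK} calls this matrix ``positive definite'' although it is in general only semidefinite, so your care here is not pedantry). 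Your handling of the measure-zero event through the regular conditional distribution likewise makes explicit what \eqref{eq:kerappr-1} leaves implicit. One small caveat: your argument uses the Moore--Penrose pseudo-inverse (symmetry of $\vA_{\Ker,\vL_n}^{\dag}$ and orthogonality of the projection $\vA_{\Ker,\vL_n}^{\dag}\vA_{\Ker,\vL_n}$), which is slightly more than the two identities the paper lists as defining $\dag$; since ``pseudo inverse'' standardly means Moore--Penrose, this is a fair reading, but it is worth flagging. In short, the paper's route buys brevity by leaning on multivariate normal theory as a black box, while yours buys a self-contained derivation that remains valid in the degenerate case.
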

\begin{proof}
Based on equation \eqref{eq:Hermite-Birkhoff}, we have that
\begin{equation}\label{eq:kerappr-inter-1}
\Mean\left(S_L|\Aset_n\right)=\Mean\left(S_L|\vS_{\vL_n}=\vf_n\right).
\end{equation}
Since the Gaussian process $S$ has the mean $\mu$ and the covariance kernel $\Ker$, the random variables
$\left(S_L,\vS_{\vL_n}\right)$ have a joint normal distribution with the mean vector
\[
\begin{pmatrix}
L\mu\\\vL_n\mu
\end{pmatrix},
\]
and the covariance matrix
\[
\begin{pmatrix}
\Ker(L,L)&\vb_{\Ker,\vL_n}(L)^T\\
\vb_{\Ker,\vL_n}(L)&\vA_{\Ker,\vL_n}
\end{pmatrix}.
\]
This shows that the conditional normal density function $p_{L|\vL_n}$ can be written as
\begin{equation}\label{eq:condpdf}
p_{L|\vL_n}(z|\vz)=
\frac{1}{\sqrt{2\pi}\sigma_{L|\vL_n}}\exp\left(-\frac{\left(z-m_{L|\vL_n}(\vz)\right)^2}{2\sigma_{L|\vL_n}^2}\right),
\quad\text{for }z\in\RR\text{ and }\vz\in\RR^n,
\end{equation}
where the mean
\begin{equation}\label{eq:mean}
m_{L|\vL_n}(\vz):=L\mu+\vb_{\Ker,\vL_n}(L)^T\vA_{\Ker,\vL_n}^{\dag}(\vz-\vL_n\mu),
\end{equation}
and the variance
\begin{equation}\label{eq:var}
\sigma_{L|\vL_n}^2:=\Ker(L,L)-\vb_{\Ker,\vL_n}(L)^T\vA_{\Ker,\vL_n}^{\dag}\vb_{\Ker,\vL_n}(L).
\end{equation}
Thus, the conditional mean $\Mean\left(S_L|\vS_{\vL_n}=\vf_n\right)$ can be computed by $p_{L|\vL_n}$, that is,
\begin{equation}\label{eq:kerappr-inter-2}
\Mean\left(S_L|\vS_{\vL_n}=\vf_n\right)=\int_{\RR}vp_{L|\vL_n}(z|\vf_n)\ud z
=m_{L|\vL_n}(\vf_n).
\end{equation}
Putting equations \eqref{eq:kerappr-inter-1} and \eqref{eq:kerappr-inter-2} into equation \eqref{eq:kerappr-2},
we have that
\[
s_{\Aset_n}(L)=m_{L|\vL_n}(\vf_n).
\]
\end{proof}

Let
\[
\psi_{\vL_n}(\vz):=
\exp\left(-\frac{1}{2}\left(\vz-\vL_n\mu\right)^T\vA_{\Ker,\vL_n}^{\dag}\left(\vz-\vL_n\mu\right)\right),
\quad
\text{for }\vz\in\RR^n.
\]
Thus,
the normal density function $p_{\vL_n}$ can be written as
\[
p_{\vL_n}(\vz)=\frac{\psi_{\vL_n}(\vz)}{(2\pi)^{n/2}\sqrt{\Det^{\dag}\big(\vA_{\Ker,\vL_n}\big)}},
\quad\text{for }\vz\in\RR^n,
\]
where $\Det^{\dag}$ is the pseudo determinant.
Let $I_{B}$ be the indicator function of the subset $B$, that is,
$I_{B}(\vz)=1$ if $\vz\in B$ otherwise $I_{B}(\vz)=0$.
We define a vector function
\[
\veta_{\Ker,\vL_n}^{\epsilon_n}(\vz):=
\frac{\int_{B_n(\vz,\epsilon_n)}\vv\psi_{\vL_n}(\vv)\ud\vv}
{\int_{B_n(\vz,\epsilon_n)}\psi_{\vL_n}(\vv)\ud\vv},
\quad\text{for }\vz\in\RR^n.
\]
Thus, we have that
\begin{equation}\label{eq:eta-vec}
\veta_{\Ker,\vL_n}^{\epsilon_n}(\vz)=
\frac{\int_{\RR^n}\vv p_{\vL_n}(\vv)I_{B_n(0,\epsilon_n)}(\vz-\vv)\ud\vv}
{\int_{\RR^n}p_{\vL_n}(\vv)I_{B_n(0,\epsilon_n)}(\vz-\vv)\ud\vv}.
\end{equation}
This shows that $\veta_{\Ker,\vL_n}^{\epsilon_n}\big(\hatvf_n\big)$
is the truncated mean of the multivariate normal vector $\vS_{\vL_n}$ conditioned on $\vS_{\vL_n}\in B_n\big(\hatvf_n,\epsilon_n\big)$, that is,
\[
\veta_{\Ker,\vL_n}^{\epsilon_n}\big(\hatvf_n\big)=
\Mean\left(\vS_{\vL_n}\big|\vS_{\vL_n}\in B_n\big(\hatvf_n,\epsilon_n\big)\right)
=
\int_{\Aset_n^{\epsilon_n}}\vL_n\omega\PP_{\Ker}^{\mu}(\ud\omega).
\]
Thus, we know that $\veta_{\Ker,\vL_n}^{\epsilon_n}\big(\hatvf_n\big)$
is the average over $B_n\big(\hatvf_n,\epsilon_n\big)$ measured by $\PP_{\Ker}^{\mu}$.
Therefore, we have that
\[
\veta_{\Ker,\vL_n}^{\epsilon_n}\big(\hatvf_n\big)\in B_n\big(\hatvf_n,\epsilon_n\big),
\]
and
\[
\lim_{\epsilon_n\to0}\veta_{\Ker,\vL_n}^{\epsilon_n}\big(\hatvf_n\big)=\vf_n.
\]

\begin{theorem}\label{thm:kerappr-inter-noise}
If the interpolation event $\Aset_n^{\epsilon_n}$ based on the noisy data $(\vL_n,\hatvf_n)$ and the noise margin $\epsilon_n$ is the same as Example \ref{exa:noise-Hermite-Birkhoff}, then for any $L\in\Banach^{\ast}$, the kernel-based estimator $s_{\Aset_n^{\epsilon_n}}(L)$ can be written as
\begin{equation}\label{eq:kerappr-4}
s_{\Aset_n^{\epsilon_n}}(L)=L\mu+\vb_{\Ker,\vL_n}(L)^T\vA_{\Ker,\vL_n}^{\dag}
\left(\veta_{\Ker,\vL_n}^{\epsilon_n}\big(\hatvf_n\big)-\vL_n\mu\right).
\end{equation}
\end{theorem}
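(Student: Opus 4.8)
The plan is to mirror the proof of Theorem~\ref{thm:kerappr-inter}, replacing the point conditioning $\vS_{\vL_n}=\vf_n$ by the event conditioning $\vS_{\vL_n}\in B_n(\hatvf_n,\epsilon_n)$, and then to collapse the resulting integral into the truncated mean $\veta_{\Ker,\vL_n}^{\epsilon_n}$. First I would invoke Proposition~\ref{pro:kerappr} together with the rewriting \eqref{eq:noise-Hermite-Birkhoff} of $\Aset_n^{\epsilon_n}$ to obtain
\[
s_{\Aset_n^{\epsilon_n}}(L)=\Mean\left(S_L\,\big|\,\vS_{\vL_n}\in B_n\big(\hatvf_n,\epsilon_n\big)\right).
\]
Expressing this conditional mean as a ratio of integrals against the joint density $p_{L,\vL_n}$ and the marginal density $p_{\vL_n}$ of the normal vector $\left(S_L,\vS_{\vL_n}\right)$ gives
\[
s_{\Aset_n^{\epsilon_n}}(L)=\frac{\int_{B_n(\hatvf_n,\epsilon_n)}\int_{\RR}z\,p_{L,\vL_n}(z,\vz)\,\ud z\,\ud\vz}{\int_{B_n(\hatvf_n,\epsilon_n)}p_{\vL_n}(\vz)\,\ud\vz}.
\]

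The second step is to factor $p_{L,\vL_n}(z,\vz)=p_{L|\vL_n}(z|\vz)\,p_{\vL_n}(\vz)$ and integrate out the inner variable $z$. Because the conditional density \eqref{eq:condpdf} is normal with mean $m_{L|\vL_n}(\vz)$ given by \eqref{eq:mean}, the inner integral $\int_{\RR}z\,p_{L|\vL_n}(z|\vz)\,\ud z$ equals $m_{L|\vL_n}(\vz)$, exactly as in the non-noise proof. A Fubini argument—justified since $B_n(\hatvf_n,\epsilon_n)$ is bounded, $m_{L|\vL_n}$ is affine, and $p_{\vL_n}$ is a normal density, so the integrand is absolutely integrable—then yields
\[
s_{\Aset_n^{\epsilon_n}}(L)=\frac{\int_{B_n(\hatvf_n,\epsilon_n)}m_{L|\vL_n}(\vz)\,p_{\vL_n}(\vz)\,\ud\vz}{\int_{B_n(\hatvf_n,\epsilon_n)}p_{\vL_n}(\vz)\,\ud\vz}.
\]

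The final step exploits the affine form of $m_{L|\vL_n}$ in \eqref{eq:mean}. Substituting it and pulling the constant term $L\mu$ and the linear functional $\vb_{\Ker,\vL_n}(L)^T\vA_{\Ker,\vL_n}^{\dag}(\,\cdot\,-\vL_n\mu)$ through the averaging leaves the ratio
\[
\frac{\int_{B_n(\hatvf_n,\epsilon_n)}\vz\,p_{\vL_n}(\vz)\,\ud\vz}{\int_{B_n(\hatvf_n,\epsilon_n)}p_{\vL_n}(\vz)\,\ud\vz},
\]
which I would identify with $\veta_{\Ker,\vL_n}^{\epsilon_n}\big(\hatvf_n\big)$. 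Setting $\vz=\hatvf_n$ in the density form \eqref{eq:eta-vec}, the indicator $I_{B_n(0,\epsilon_n)}(\hatvf_n-\vv)$ restricts both integrals to $\vv\in B_n(\hatvf_n,\epsilon_n)$, so this ratio is precisely the truncated mean; equivalently it is the probabilistic identity $\veta_{\Ker,\vL_n}^{\epsilon_n}(\hatvf_n)=\Mean\big(\vS_{\vL_n}\mid\vS_{\vL_n}\in B_n(\hatvf_n,\epsilon_n)\big)$ recorded just before the statement. Collecting the terms then gives \eqref{eq:kerappr-4}.

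I do not expect a serious obstacle: this is essentially the computation of Theorem~\ref{thm:kerappr-inter} with the conditional mean integrated against a truncated rather than a Dirac density. The one place that requires genuine care is the interchange of the $z$- and $\vz$-integrations and the matching of the abstract truncated mean with its density definition—in particular that $p_{\vL_n}$ and $\psi_{\vL_n}$ differ only by the normalizing constant $(2\pi)^{n/2}\sqrt{\Det^{\dag}\big(\vA_{\Ker,\vL_n}\big)}$, which cancels between numerator and denominator; both points are routine given that the ball is bounded.
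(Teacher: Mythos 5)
Your proposal is correct and follows essentially the same route as the paper's own proof: condition on the ball event via \eqref{eq:noise-Hermite-Birkhoff}, write the conditional mean as a ratio of integrals of $p_{L,\vL_n}$ and $p_{\vL_n}$, factor $p_{L,\vL_n}=p_{L|\vL_n}\,p_{\vL_n}$ so the inner integral collapses to $m_{L|\vL_n}(\vz)$, and then use the affine form \eqref{eq:mean} together with \eqref{eq:eta-vec} to identify the remaining ratio with $\veta_{\Ker,\vL_n}^{\epsilon_n}\big(\hatvf_n\big)$. The only difference is cosmetic: you make explicit the Fubini interchange and the cancellation of the normalizing constant between $p_{\vL_n}$ and $\psi_{\vL_n}$, which the paper leaves implicit.
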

\begin{proof}
Based on equation \eqref{eq:noise-Hermite-Birkhoff}, we have that
\begin{equation}\label{eq:kerappr-inter-noise-1}
s_{\Aset_n^{\epsilon_n}}(L)=
\Mean\left(S_L|\Aset_n^{\epsilon_n}\right)
=\Mean\left(S_L\big|\vS_{\vL_n}\in B_n\big(\hatvf_n,\epsilon_n\big)\right).
\end{equation}
The normal distributions of $S_L$ and $\vS_{\vL_n}$ show that
\begin{equation}\label{eq:kerappr-inter-noise-2}
\Mean\left(S_L\big|\vS_{\vL_n}\in B_n\big(\hatvf_n,\epsilon_n\big)\right)
=
\frac{\int_{\RR}\int_{B_n(\hatvf_n,\epsilon_n)}zp_{L,\vL_n}(z,\vv)\ud\vv\ud z}
{\int_{B_n(\hatvf_n,\epsilon_n)}p_{\vL_n}(\vv)\ud\vv}
\end{equation}
Replacing $\vz=\hatvf_n$ in equation
\eqref{eq:eta-vec}, we have that
\begin{equation}\label{eq:kerappr-inter-noise-3}
\veta_{\Ker,\vL_n}^{\epsilon_n}\big(\hatvf_n\big)=
\frac{\int_{\RR^n}\vv p_{\vL_n}(\vv)I_{B_n(0,\epsilon_n)}\big(\hatvf_n-\vv\big)\ud\vv}
{\int_{\RR^n}p_{\vL_n}(\vv)I_{B_n(0,\epsilon_n)}\big(\hatvf_n-\vv\big)\ud\vv}.
\end{equation}
Since $p_{L,\vL_n}\left(z,\vv\right)=p_{L|\vL_n}\left(z|\vv\right)p_{\vL_n}(\vv)$,
we know that
\begin{equation}\label{eq:kerappr-inter-noise-4}
\int_{\RR}zp_{L,\vL_n}(z,\vv)\ud z
=
p_{\vL_n}(\vv)\int_{\RR}zp_{L|\vL_n}(z|\vv)\ud z
=
m_{L|\vL_n}(\vv)p_{\vL_n}(\vv).
\end{equation}
Integrating the both sides of equation \eqref{eq:kerappr-inter-noise-4} with respect to $\vv$,
we have that
\begin{equation}\label{eq:kerappr-inter-noise-5}
\begin{split}
&\int_{B_n(\hatvf_n,\epsilon_n)}
\int_{\RR}zp_{L,\vL_n}(z,\vv)\ud z\ud\vv
=
\int_{\RR^n}\int_{\RR}zp_{L,\vL_n}(z,\vv)I_{B_n(0,\epsilon_n)}\big(\hatvf_n-\vv\big)
\ud v\ud\vv
\\
=&\int_{\RR^n}\left(L\mu+\vb_{\Ker,\vL_n}(L)^T\vA_{\Ker,\vL_n}^{\dag}\left(\vv-\vL_n\mu
\right)\right)p_{\vL_n}(\vv)I_{B_n(0,\epsilon_n)}\big(\hatvf_n-\vv\big)\ud\vv
\end{split}
\end{equation}
Putting equations \eqref{eq:kerappr-inter-noise-3} and \eqref{eq:kerappr-inter-noise-5} into equation \eqref{eq:kerappr-inter-noise-2}, we compute that the conditional mean
\begin{equation}\label{eq:kerappr-inter-noise-6}
\Mean\left(S_L\big|\vS_{\vL_n}\in B_n\big(\hatvf_n,\epsilon_n\big)\right)
=
L\mu+\vb_{\Ker,\vL_n}(L)^T\vA_{\Ker,\vL_n}^{\dag}
\left(\veta_{\Ker,\vL_n}^{\epsilon_n}\big(\hatvf_n\big)-\vL_n\mu\right).
\end{equation}
Combining equations \eqref{eq:kerappr-inter-noise-1} and \eqref{eq:kerappr-inter-noise-6}, we complete the proof.
\end{proof}

Based on Theorems \ref{thm:kerappr-inter} and \ref{thm:kerappr-inter-noise},
we give the definition of kernel-based estimators conditioned on the given data.

\begin{definition}\label{def:kerapprox}
The estimator $s_{\Aset_n}(L)$ in equation \eqref{eq:kerappr-3} is called a kernel-based estimator at the functional $L$
conditioned on the non-noise data $(\vL_n,\vf_n)$.
The estimator $s_{\Aset_n^{\epsilon_n}}(L)$ in equation \eqref{eq:kerappr-4} is called a kernel-based estimator
at the functional $L$ conditioned on the noisy data $\big(\vL_n,\hatvf_n\big)$ and the noise margin $\epsilon_n$.
Specially, we rewrite $s_{\Aset_n}(L)$ and $s_{\Aset_n^{\epsilon_n}}(L)$ as $s_{\vL_n,\vf_n}(L)$
and $s_{\vL_n,\hatvf_n,\epsilon_n}(L)$
\footnote{In this article, the mean $\mu$ and the kernel $\Ker$ are always fixed in computation of $s_{\vL_n,\vf_n}(L)$
and $s_{\vL_n,\hatvf_n,\epsilon_n}(L)$.
To simplify the notations, we do not index $\mu$ and $\Ker$ at the kernel-based estimators.
}, respectively.
\end{definition}

\begin{corollary}\label{cor:kerappr-inter-coef}
The kernel-based estimators $s_{\vL_n,\vf_n}(L)$
and $s_{\vL_n,\hatvf_n,\epsilon_n}(L)$
in Theorems \ref{thm:kerappr-inter} and \ref{thm:kerappr-inter-noise}
can be rewritten as
\[
s_{\vL_n,\vf_n}(L)=L\mu+\vb_{\Ker,\vL_n}(L)^T\vc_n\text{ and }
s_{\vL_n,\hatvf_n,\epsilon_n}(L)=L\mu+\vb_{\Ker,\vL_n}(L)^T\hatvc_n,
\]
respectively, where $\vc_n$ and $\hatvc_n$ are the least-squared solutions of the linear systems
\[
\vA_{\Ker,\vL_n}\vc_n=\vf_n-\vL_n\mu
\text{ and }
\vA_{\Ker,\vL_n}\hatvc_n=\veta_{\Ker,\vL_n}^{\epsilon_n}\big(\hatvf_n\big)-\vL_n\mu,
\]
respectively.
\end{corollary}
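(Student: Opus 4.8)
The plan is to obtain both identities by reading the coefficient vectors directly off the closed forms already proved in Theorems \ref{thm:kerappr-inter} and \ref{thm:kerappr-inter-noise}, and then to recognize those vectors as the least-squares solutions of the stated linear systems through the defining property of the Moore--Penrose pseudo-inverse.

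First I would set $\vc_n:=\vA_{\Ker,\vL_n}^{\dag}(\vf_n-\vL_n\mu)$ and $\hatvc_n:=\vA_{\Ker,\vL_n}^{\dag}\big(\veta_{\Ker,\vL_n}^{\epsilon_n}(\hatvf_n)-\vL_n\mu\big)$. Substituting these definitions into equations \eqref{eq:kerappr-3} and \eqref{eq:kerappr-4} gives at once $s_{\vL_n,\vf_n}(L)=L\mu+\vb_{\Ker,\vL_n}(L)^T\vc_n$ and $s_{\vL_n,\hatvf_n,\epsilon_n}(L)=L\mu+\vb_{\Ker,\vL_n}(L)^T\hatvc_n$. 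This step is a pure rewriting and carries no difficulty.

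The substantive point is to verify that the vectors $\vc_n$ and $\hatvc_n$ so defined are indeed the least-squares solutions of $\vA_{\Ker,\vL_n}\vc_n=\vf_n-\vL_n\mu$ and $\vA_{\Ker,\vL_n}\hatvc_n=\veta_{\Ker,\vL_n}^{\epsilon_n}(\hatvf_n)-\vL_n\mu$. For this I would invoke the standard characterization of the pseudo-inverse: for any right-hand side $\boldsymbol{d}$, the vector $\vA_{\Ker,\vL_n}^{\dag}\boldsymbol{d}$ minimizes the residual $\norm{\vA_{\Ker,\vL_n}\vc-\boldsymbol{d}}_2$ over all $\vc$ and, among all such minimizers, has the least Euclidean norm, which accounts for the definite article in ``\emph{the} least-squares solution.'' Applying this with $\boldsymbol{d}=\vf_n-\vL_n\mu$ and with $\boldsymbol{d}=\veta_{\Ker,\vL_n}^{\epsilon_n}(\hatvf_n)-\vL_n\mu$ yields the claim.

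Should a self-contained argument be preferred to citing this property, I would check the normal equations directly: since $\vA_{\Ker,\vL_n}$ is symmetric by Proposition \ref{pro:PDK} and satisfies $\vA_{\Ker,\vL_n}\vA_{\Ker,\vL_n}^{\dag}\vA_{\Ker,\vL_n}=\vA_{\Ker,\vL_n}$, the residual $\boldsymbol{d}-\vA_{\Ker,\vL_n}\vA_{\Ker,\vL_n}^{\dag}\boldsymbol{d}$ lies in the kernel of $\vA_{\Ker,\vL_n}$, so $\vA_{\Ker,\vL_n}^{\dag}\boldsymbol{d}$ satisfies $\vA_{\Ker,\vL_n}^T\vA_{\Ker,\vL_n}\vc=\vA_{\Ker,\vL_n}^T\boldsymbol{d}$ and is therefore a least-squares solution. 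The only obstacle here is routine bookkeeping with the pseudo-inverse identities in the possibly singular case; there is no genuine analytic difficulty, since the heavy lifting was already carried out in deriving the closed-form estimators.
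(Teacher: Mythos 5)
Your proposal is correct and takes essentially the same route as the paper: read the coefficient vectors off the closed forms in Theorems \ref{thm:kerappr-inter} and \ref{thm:kerappr-inter-noise}, then identify $\vA_{\Ker,\vL_n}^{\dag}\ve$ as a least-squares solution of $\vA_{\Ker,\vL_n}\vc=\ve$ by the standard property of the pseudo-inverse of a symmetric positive (semi-)definite matrix. Your extra remarks on the minimal-norm characterization and the normal-equations verification are refinements of that same step, not a different argument.
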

\begin{proof}
Theorems \ref{thm:kerappr-inter} and \ref{thm:kerappr-inter-noise} show that the kernel-based estimators
$s_{\vL_n,\vf_n}(L)$
and $s_{\vL_n,\hatvf_n,\epsilon_n}(L)$ have the forms of
\[
s_{\vL_n,\vf_n}(L)=L\mu+\vb_{\Ker,\vL_n}(L)^T\vA_{\Ker,\vL_n}^{\dag}\left(\vf_n-\vL_n\mu\right),
\]
and
\[
s_{\vL_n,\hatvf_n,\epsilon_n}(L)
=
L\mu+\vb_{\Ker,\vL_n}(L)^T\vA_{\Ker,\vL_n}^{\dag}
\left(\veta_{\Ker,\vL_n}^{\epsilon_n}\big(\hatvf_n\big)-\vL_n\mu\right).
\]
Since $\vA_{\Ker,\vL_n}$ is a symmetric positive definite matrix,
we know that
$\vc:=\vA_{\Ker,\vL_n}^{\dag}\ve$ is a minimizer of the least-squared problem
\[
\norm{\vA_{\Ker,\vL_n}\vc-\ve}_2
=
\min_{\vz\in\RR^n}\norm{\vA_{\Ker,\vL_n}\vz-\ve}_2,
\]
for any $\ve\in\RR^n$.
Replacing $\ve$ to $\vf_n-\vL_n\mu$ or $\veta_{\Ker,\vL_n}^{\epsilon}\big(\hatvf_n\big)-\vL_n\mu$, we complete the proof.
\end{proof}

\begin{corollary}\label{cor:kerappr-inter-coef-error}
The coefficients $\vc_n$ and $\hatvc_n$ in Corollary \ref{cor:kerappr-inter-coef} have the bound
\[
\norm{\vc_n-\hatvc_n}_{2}\leq\frac{2\epsilon_n}{\lambda_{\min}\big(\vA_{\Ker,\vL_n}\big)}.
\]
\end{corollary}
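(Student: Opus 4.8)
The plan is to exploit the fact that $\vc_n$ and $\hatvc_n$ are produced by the \emph{same} pseudo inverse acting on two different right-hand sides, so that the common mean term $\vA_{\Ker,\vL_n}^{\dag}\vL_n\mu$ cancels upon subtraction and the whole estimate reduces to a spectral-norm bound times a distance in $\RR^n$.

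First I would take the closed forms supplied by Corollary \ref{cor:kerappr-inter-coef}, namely $\vc_n=\vA_{\Ker,\vL_n}^{\dag}(\vf_n-\vL_n\mu)$ and $\hatvc_n=\vA_{\Ker,\vL_n}^{\dag}\big(\veta_{\Ker,\vL_n}^{\epsilon_n}(\hatvf_n)-\vL_n\mu\big)$, and subtract them so that the $\vL_n\mu$ contributions vanish, giving
\[
\vc_n-\hatvc_n=\vA_{\Ker,\vL_n}^{\dag}\big(\vf_n-\veta_{\Ker,\vL_n}^{\epsilon_n}(\hatvf_n)\big).
\]
By submultiplicativity of the spectral norm this yields $\norm{\vc_n-\hatvc_n}_2\leq\norm{\vA_{\Ker,\vL_n}^{\dag}}_2\,\norm{\vf_n-\veta_{\Ker,\vL_n}^{\epsilon_n}(\hatvf_n)}_2$, and it remains to bound the two factors.

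For the matrix factor I would invoke Proposition \ref{pro:PDK}: since $\vA_{\Ker,\vL_n}$ is symmetric positive definite, its spectral decomposition shows that $\vA_{\Ker,\vL_n}^{\dag}$ inverts exactly the nonzero eigenvalues, so its largest singular value is the reciprocal of the smallest positive eigenvalue, i.e. $\norm{\vA_{\Ker,\vL_n}^{\dag}}_2=1/\lambda_{\min}\big(\vA_{\Ker,\vL_n}\big)$. For the vector factor I would insert the noisy data vector $\hatvf_n$ and apply the triangle inequality,
\[
\norm{\vf_n-\veta_{\Ker,\vL_n}^{\epsilon_n}(\hatvf_n)}_2
\leq\norm{\vf_n-\hatvf_n}_2+\norm{\hatvf_n-\veta_{\Ker,\vL_n}^{\epsilon_n}(\hatvf_n)}_2.
\]
Each summand is at most $\epsilon_n$: the first because $\hatvf_n=\vf_n+\vxi_n$ with $\vxi_n\in B_n(0,\epsilon_n)$, so $\norm{\vf_n-\hatvf_n}_2=\norm{\vxi_n}_2\leq\epsilon_n$; the second because the truncated mean satisfies $\veta_{\Ker,\vL_n}^{\epsilon_n}(\hatvf_n)\in B_n(\hatvf_n,\epsilon_n)$, a property established just before Theorem \ref{thm:kerappr-inter-noise}. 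Hence $\norm{\vf_n-\veta_{\Ker,\vL_n}^{\epsilon_n}(\hatvf_n)}_2\leq 2\epsilon_n$, and multiplying the two factors gives the claimed bound $2\epsilon_n/\lambda_{\min}\big(\vA_{\Ker,\vL_n}\big)$.

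I do not expect any genuine obstacle here: the argument is a short chain of triangle inequalities together with one spectral-norm estimate. The only point deserving a moment's care is the identity $\norm{\vA_{\Ker,\vL_n}^{\dag}}_2=1/\lambda_{\min}\big(\vA_{\Ker,\vL_n}\big)$, where $\lambda_{\min}$ denotes the smallest \emph{positive} eigenvalue; because the pseudo inverse acts as the ordinary inverse on the range of $\vA_{\Ker,\vL_n}$ and as zero on its kernel, this is exactly the constant named in the statement, and no appeal to invertibility of the full matrix is needed.
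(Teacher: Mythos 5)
Your proof is correct and follows essentially the same route as the paper's: subtract the two pseudo-inverse representations so the $\vL_n\mu$ terms cancel, bound $\norm{\vf_n-\veta_{\Ker,\vL_n}^{\epsilon_n}(\hatvf_n)}_2\leq 2\epsilon_n$ from the fact that both vectors lie in $B_n(\hatvf_n,\epsilon_n)$, and bound $\norm{\vA_{\Ker,\vL_n}^{\dag}}_2$ by $1/\lambda_{\min}(\vA_{\Ker,\vL_n})$. Your version merely makes explicit the triangle inequality through $\hatvf_n$ and the spectral justification for the pseudo-inverse norm, both of which the paper leaves implicit.
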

\begin{proof}
Let $\ve_n:=\vf_n-\veta_{\Ker,\vL_n}^{\epsilon_n}\big(\hatvf_n\big)$.
Thus, $\vc_n-\hatvc_n=\vA_{\Ker,\vL_n}^{\dag}\ve_n$.
Since $\vf_n,\veta_{\Ker,\vL_n}^{\epsilon_n}\big(\hatvf_n\big)\in B_n\big(\hatvf_n,\epsilon_n\big)$,
we have that $\norm{\ve_n}_{2}\leq2\epsilon_n$.
Therefore, we know that
\[
\norm{\vc_n-\hatvc_n}_{2}
\leq\norm{\vA_{\Ker,\vL_n}^{\dag}}_2\norm{\ve_n}_2
\leq\frac{2\epsilon_n}{\lambda_{\min}\big(\vA_{\Ker,\vL_n}\big)}.
\]
\end{proof}

\subsection{Convergence of Kernel-based Estimators}\label{sec:ConvKerAppr}

Let $\left\{\Aset_n:n\in\NN\right\}\subseteq\Filter_{\Banach}$ be a collection of interpolation events.
Thus, we obtain the kernel-based estimators $\left\{s_{\Aset_n}(L):n\in\NN\right\}$ in Proposition \ref{pro:kerappr}.
Now we show the convergence of $s_{\Aset_n}(L)$ to the exact value $Lu$ by the kernel-based probability measure $\PP_{\Ker}^{\mu}$. Let $B_{\Banach}(0,r):=\left\{\omega\in\Banach:\norm{\omega}_{\Banach}\leq r\right\}$ be the closed ball centered at the origin $0$ with the radius $r>0$. Let $\Aset_{\infty}:=\cap_{n\in\NN}\Aset_n$.

\begin{proposition}\label{pro:kerappr-conv}
If the interpolation events $\left\{\Aset_n:n\in\NN\right\}\subseteq\Filter_{\Banach}$ satisfy that
\begin{equation}\label{eq:kerappr-conv-cond-1}
B_{\Banach}(0,r)\supseteq\Aset_1\supseteq\ldots\supseteq\Aset_n\supseteq\ldots\supseteq
\Aset_{\infty}=\left\{u\right\},
\end{equation}
then for any $L\in\Banach^{\ast}$, the kernel-based estimator $s_{\Aset_n}(L)$ converges to the exact value $Lu$ when $n\to\infty$.
\end{proposition}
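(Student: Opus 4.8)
The plan is to start from the conditional-mean representation of Proposition~\ref{pro:kerappr}, namely $s_{\Aset_n}(L)=\Mean(S_L\mid\Aset_n)$, and to exploit that $Lu=\langle u,L\rangle_{\Banach}=S_L(u)$ is a constant while $u\in\Aset_{\infty}\subseteq\Aset_n$ for every $n$ by hypothesis~\eqref{eq:kerappr-conv-cond-1}. Writing the error as an average of $S_L-Lu=\langle\,\cdot\,-u,L\rangle_{\Banach}$ over $\Aset_n$ and using the bound $\abs{\langle\omega-u,L\rangle_{\Banach}}\leq\norm{L}_{\Banach^{\ast}}\norm{\omega-u}_{\Banach}$ from~\eqref{eq:SL-L-1}, the problem reduces to showing that the conditional average
\[
\frac{1}{\PP_{\Ker}^{\mu}(\Aset_n)}\int_{\Aset_n}\norm{\omega-u}_{\Banach}\,\PP_{\Ker}^{\mu}(\ud\omega)\longrightarrow0,\qquad n\to\infty.
\]
The containment $\Aset_1\subseteq B_{\Banach}(0,r)$ is what makes this tractable: on $\Aset_n\subseteq\Aset_1$ one has $\norm{\omega-u}_{\Banach}\leq r+\norm{u}_{\Banach}=:M$, so the integrand is uniformly bounded and dominated by $M\mathbf{1}_{\Aset_1}$, which is $\PP_{\Ker}^{\mu}$-integrable.

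First I would split the average at a threshold $\delta>0$ into a near part and a far part, bounding the ratio by $\delta+M\,\PP_{\Ker}^{\mu}\!\left(\Aset_n\cap\{\norm{\omega-u}_{\Banach}>\delta\}\right)/\PP_{\Ker}^{\mu}(\Aset_n)$. The nested sets $\Aset_n\cap\{\norm{\omega-u}_{\Banach}>\delta\}$ decrease to $\Aset_{\infty}\cap\{\norm{\,\cdot\,-u}_{\Banach}>\delta\}=\{u\}\cap\{\norm{\,\cdot\,-u}_{\Banach}>\delta\}=\varnothing$, so by continuity from above their $\PP_{\Ker}^{\mu}$-measure (the numerator) tends to $0$, and letting $\delta\to0$ afterwards would finish the argument. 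The genuine difficulty is that, in the infinite-dimensional non-degenerate setting, the denominator $\PP_{\Ker}^{\mu}(\Aset_n)$ also tends to $0$ (because $\Aset_{\infty}=\{u\}$ is $\PP_{\Ker}^{\mu}$-null), so the quotient is an indeterminate $0/0$ form; the crux is to prove that the far-field mass is negligible \emph{relative} to the total mass, i.e.\ that the conditional measures $\PP_{\Ker}^{\mu}(\,\cdot\mid\Aset_n)$ concentrate at $u$. This concentration is where the structure of the interpolation events (their convexity, coming from the form of $\Aset_n$ in Example~\ref{exa:Hermite-Birkhoff}) and the non-degeneracy of the Gaussian measure must enter; it does not follow from the set-theoretic hypotheses alone.

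To control this main obstacle I would reduce to the canonical affine events $\Aset_n=\{\omega:\vS_{\vL_n}(\omega)=\vf_n\}$ with $\vf_n=\vL_nu$ and work inside the Gaussian Hilbert space $\Hilbert_S$ of Proposition~\ref{pro:SL-L-imbedding}. The hypothesis $\Aset_{\infty}=\{u\}$ says exactly that the functionals $L_1,L_2,\ldots$ separate the points of $\Banach$; combined with the Paley--Wiener correspondence $V\mapsto\int_{\Banach}\omega\,V(\omega)\,\PP_{\Ker}^{\mu}(\ud\omega)$ between $\Hilbert_S$ and the Cameron--Martin space (well defined by Fernique, as in Proposition~\ref{pro:SL-L}), this separation forces $\Span\{S_{L_k}:k\in\NN\}$ to be dense in $\Hilbert_S$. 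Since, by Theorem~\ref{thm:kerappr-inter}, $s_{\Aset_n}(L)$ is precisely the evaluation at $u$ of the $\Hilbert_S$-orthogonal projection of $S_L$ onto $\Span\{S_{L_1},\ldots,S_{L_n}\}$, I would rewrite it as the inner product in $\Hilbert_S$ against the representer $V_u$ of the functional $L\mapsto\langle u,L\rangle_{\Banach}$ and let the projections converge as $n\to\infty$, obtaining $s_{\Aset_n}(L)\to\langle u,L\rangle_{\Banach}=Lu$. I expect the density/totality step, together with the justification that evaluation at $u$ is represented in $\Hilbert_S$, to be the hardest part of the argument.
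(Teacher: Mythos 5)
You have correctly identified the crux, but your proposal does not prove Proposition~\ref{pro:kerappr-conv}, and the two halves of it pull in different directions. The paper's own proof works at the level of conditional laws rather than conditional averages of $\norm{\omega-u}_{\Banach}$: it sets $\nu_n(A):=\PP_{\Ker}^{\mu}(S_L\in A|\Aset_n)$, claims from the nesting in \eqref{eq:kerappr-conv-cond-1} that $\nu_n(A)\to\PP_{\Ker}^{\mu}(S_L\in A|\Aset_{\infty})=\delta_{Lu}(A)$, upgrades this to weak convergence by the Portmanteau theorem, realizes the $\nu_n$ by almost surely convergent random variables $Z_n$ via Skorokhod's representation theorem, and then uses $\Aset_n\subseteq B_{\Banach}(0,r)$ to get $\abs{Z_n}\leq r\norm{L}_{\Banach^{\ast}}$, so that bounded convergence gives $s_{\Aset_n}(L)=\Mean(Z_n)\to Lu$. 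So the boundedness hypothesis plays the same role in both arguments (passing from distributional convergence to convergence of means); the difference is that the step you call ``the genuine difficulty'' is exactly the step the paper disposes of by ``decreasing monotonicity,'' i.e.\ continuity from above of $\PP_{\Ker}^{\mu}$. That step is legitimate only when $\PP_{\Ker}^{\mu}(\Aset_{\infty})>0$; when $\PP_{\Ker}^{\mu}(\Aset_n)\to0$ it is the same $0/0$ ratio you flag. Your diagnosis that concentration does not follow from the set-theoretic hypotheses alone is in fact provably right: take $\Banach=\RR^2$ with the standard Gaussian measure, $u=0$, $v=(1,1)$, and $\Aset_n:=\{u\}\cup\left(B_2(v,1/n)\setminus\{v\}\right)$; these are Borel, nested, bounded, with $\Aset_{\infty}=\{u\}$, yet $s_{\Aset_n}(L)\to Lv\neq Lu$. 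So your part~1 cannot be completed from the stated hypotheses --- and this counterexample shows the gap is in the paper's proof as well, not merely in your attempt.

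Your part~2 is not a repair of this proposition but a change of problem. First, a nonempty affine event $\{\omega:\vS_{\vL_n}(\omega)=\vf_n\}$ is a translate of $\cap_{j\leq n}\ker L_j$, a closed subspace of finite codimension, which is unbounded whenever $\Banach$ is infinite-dimensional; hence no nontrivial affine event can satisfy $\Aset_1\subseteq B_{\Banach}(0,r)$, and your ``reduction to the canonical affine events'' silently replaces Proposition~\ref{pro:kerappr-conv} by Theorem~\ref{thm:kerappr-conv} --- which the paper states and proves separately (by characteristic functions) precisely because the boundedness hypothesis fails there. Second, even in that setting your projection argument needs a representer $V_u\in\Hilbert_S$ with $u=\Gamma_{\Ker}^{\mu}(V_u)$, i.e.\ $u\in\range\big(\Gamma_{\Ker}^{\mu}\big)$ (Cameron--Martin membership). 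That is an extra hypothesis made neither in Proposition~\ref{pro:kerappr-conv} nor in Theorem~\ref{thm:kerappr-conv}; the paper invokes it only later, for the error bounds of Theorem~\ref{thm:kerappr-error}, and in infinite dimensions it excludes $\PP_{\Ker}^{\mu}$-almost every $u$. Granting it, your density-plus-projection argument is essentially sound (it amounts to combining the bound of Theorem~\ref{thm:kerappr-error} with $\sigma_{L|\vL_n}\to0$), but it then proves a statement that is neither the proposition under review nor the paper's Theorem~\ref{thm:kerappr-conv} in full generality.
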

\begin{proof}
Let $a:=Lu$. For $n\in\NN$, we define the probability measure
\begin{equation}\label{eq:kerappr-conv-0}
\nu_n(A):=\PP_{\Ker}^{\mu}\left(S_{L}\in A|\Aset_n\right),
\quad\text{for any open set }A\text{ in }\RR.
\end{equation}
Since $\Aset_n\in\Filter_{\Banach}$ for all $n\in\NN$, the intersection $\Aset_{\infty}\in\Filter_{\Banach}$.
The decreasing monotonicity of $\Aset_n$ in equation \eqref{eq:kerappr-conv-cond-1} shows that
\[
\lim_{n\to\infty}\PP_{\Ker}^{\mu}\left(S_{L}\in A|\Aset_n\right)
=\PP_{\Ker}^{\mu}\left(S_{L}\in A|\Aset_{\infty}\right)
\]
Thus, we have that
\[
\lim_{n\to\infty}\nu_n(A)=\delta_{a}(A),
\]
where $\delta_{a}$ is the Dirac delta measure at $a$.
The Portmanteau theorem assures that $\nu_n$ converges to $\delta_{a}$ when $n\to\infty$.
Moreover, the Skorokhod's representation theorem assures that there exist random variables $\left\{Z_n:n\in\NN\right\}$ defined on a common probability space $\left([0,1],\Filter_{[0,1]},\nu_{\ast}\right)$ such that $Z_n$ has the probability distribution $\nu_n$ and $Z_n$ converges to $a$ almost surely when $n\to\infty$,
where $\Filter_{[0,1]}$ is the Borel $\sigma$-algebra of $[0,1]$ and $\nu_{\ast}$ is the Lebesgue measure.
This shows that
\begin{equation}\label{eq:kerappr-conv-1}
\Mean\left(S_L|\Aset_n\right)
=\int_{\RR}z\nu_n(\ud z)
=\Mean(Z_n).
\end{equation}
Putting equation \eqref{eq:kerappr-2} into equation \eqref{eq:kerappr-conv-1}, we have that
\begin{equation}\label{eq:kerappr-conv-2}
s_{\Aset_n}(L)
=\Mean(Z_n).
\end{equation}

Let $M:=r\norm{L}_{\Banach^{\ast}}$. Since $\Aset_n\subseteq B_{\Banach}(0,r)$, we have that
\[
\abs{\langle\omega,L\rangle_{\Banach}}\leq\norm{L}_{\Banach^{\ast}}\norm{\omega}_{\Banach}\leq M,
\quad\text{for all }\omega\in\Aset_n.
\]
This shows that
\[
\nu_n\big([-M,M]\big)=\PP_{\Ker}^{\mu}\left(S_{L}\in[-M,M]|\Aset_n\right)=1.
\]
Thus, $\abs{Z_n}\leq M$ almost surely. By the bounded convergence theorem,
we know that $Z_n$ converges to $a$ in $\Leb_1$-based mean when $n\to\infty$.
Therefore, we have that
\begin{equation}\label{eq:kerappr-conv-3}
\lim_{n\to\infty}\Mean(Z_n)=a.
\end{equation}
Combining equations \eqref{eq:kerappr-conv-2} and \eqref{eq:kerappr-conv-3}, we conclude that
\[
\lim_{n\to\infty}s_{\Aset_n}(L)=a=Lu.
\]
\end{proof}

Given the infinite countable data $\left\{(L_n,f_n):n\in\NN\right\}\subseteq\Banach^{\ast}\times\RR$,
we have the pairs of the vectors $\left\{(\vL_n,\vf_n):n\in\NN\right\}$ such that
$\vL_n=\left(L_1,\cdots,L_n\right)^T$ and $\vf_n=\left(f_1,\cdots,f_n\right)^T$ for all $n\in\NN$.
Just like Example \ref{exa:Hermite-Birkhoff}, we can use the data $\big(\vL_n,\vf_n\big)$ and the Gaussian process $S$ to construct the interpolation event $\Aset_n$.
Thus, $u\in\Aset_n$ for all $n\in\NN$. This shows that $u\in\cap_{n\in\NN}\Aset_n=\Aset_{\infty}$.
By Theorems \ref{thm:kerappr-inter}, we can obtain the kernel-based estimator $s_{\vL_n,\vf_n}(L)$ conditioned on the data $(\vL_n,\vf_n)$.
But it is not easy to check whether $\Aset_n$ is bounded in $\Banach$. Thus, we can not use Proposition \ref{pro:kerappr-conv} to verify the convergence of $s_{\vL_n,\vf_n}(L)$ directly.
Now we will prove the convergence by the similar method of Proposition \ref{pro:kerappr-conv} without the bounded conditions.

\begin{theorem}\label{thm:kerappr-conv}
If the interpolation events $\left\{\Aset_n:n\in\NN\right\}$ based on the non-noise data $\left\{\big(\vL_n,\vf_n\big):n\in\NN\right\}$ same as Example \ref{exa:Hermite-Birkhoff} satisfy that
\begin{equation}\label{eq:kerappr-conv-cond-2}
\Aset_1\supseteq\ldots\supseteq\Aset_n\supseteq\ldots\supseteq
\Aset_{\infty}=\left\{u\right\},
\end{equation}
then for any $L\in\Banach^{\ast}$, the kernel-based estimator $s_{\vL_n,\vf_n}(L)$ converges to the exact value $Lu$ when $n\to\infty$.
\end{theorem}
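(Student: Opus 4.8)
The plan is to keep the architecture of Proposition~\ref{pro:kerappr-conv} intact up to the weak-convergence step, and then to replace the bounded convergence theorem — which there exploited the boundedness $\Aset_n\subseteq B_{\Banach}(0,r)$ — by an argument that uses the Gaussian form of the conditional law. Set $a:=Lu$ and let $\nu_n$ be the conditional distribution of $S_L$ given $\Aset_n=\left\{\vS_{\vL_n}=\vf_n\right\}$. By Theorem~\ref{thm:kerappr-inter} together with the conditional variance formula \eqref{eq:var}, this law is exactly the univariate normal $\Normal\big(m_n,\sigma_n^2\big)$, where $m_n:=s_{\vL_n,\vf_n}(L)=m_{L|\vL_n}(\vf_n)$ and $\sigma_n^2:=\sigma_{L|\vL_n}^2$. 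Thus the theorem reduces to the single numerical assertion $m_n\to a$.

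First I would reproduce the monotone-conditioning step of Proposition~\ref{pro:kerappr-conv}: hypothesis \eqref{eq:kerappr-conv-cond-2} gives the decreasing family $\Aset_n\downarrow\Aset_{\infty}=\{u\}$, on which $S_L$ takes the single value $S_L(u)=Lu=a$, so that $\nu_n\to\delta_a$ weakly. No boundedness of the $\Aset_n$ enters here, which is why the hypothesis \eqref{eq:kerappr-conv-cond-1} of Proposition~\ref{pro:kerappr-conv} can be dropped. The new ingredient is the passage from weak convergence to convergence of means. Since every $\nu_n$ is Gaussian, I would argue through characteristic functions: $\exp\big(\mathrm{i}tm_n-\tfrac12 t^2\sigma_n^2\big)\to\mathrm{e}^{\mathrm{i}ta}$ for every $t\in\RR$. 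Taking moduli forces $\sigma_n^2\to0$, and then $\mathrm{e}^{\mathrm{i}tm_n}\to\mathrm{e}^{\mathrm{i}ta}$ for all $t$ forces $m_n\to a$. This yields $s_{\vL_n,\vf_n}(L)=m_n\to Lu$ with no boundedness hypothesis, which is precisely the adaptation promised in the paragraph preceding the statement.

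The hard part is that the events $\Aset_n$ have $\PP_{\Ker}^{\mu}$-measure zero in an infinite-dimensional Banach space, so both $\nu_n$ and its limit must be read as regular conditional laws furnished by Gaussian conditioning, and the monotone limit $\nu_n\to\delta_a$ needs a justification that does not rest on $\PP_{\Ker}^{\mu}(\Aset_n)>0$. I expect this to be the main obstacle, and I would circumvent it by proving $\sigma_n^2\to0$ and $m_n\to a$ directly. For the variances, reducing to $\mu=0$ (replacing $u$ by $u-\mu$ and $S_L$ by $S_L-L\mu$, which only shifts \eqref{eq:kerappr-3} affinely), one has $\sigma_n^2=\Mean\abs{S_L-\Mean(S_L\,|\,\Filter_n)}^2$ along the increasing filtration $\Filter_n:=\sigma\big(\vS_{\vL_n}\big)$; this is non-increasing and, by $\Leb_2$ martingale convergence, tends to $\Mean\abs{S_L-\Mean(S_L\,|\,\Filter_{\infty})}^2$. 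The condition $\Aset_{\infty}=\{u\}$ means that $\{L_k\}$ separates the points of $\Banach$, and via the barycenter $h_W:=\Mean(W\omega)\in\Banach$ — well defined by the Fernique integrability used in Proposition~\ref{pro:SL-L} and satisfying $Lh_W=\left(S_L,W\right)_{\Hilbert_S}$ — this separation upgrades to totality of $\left\{S_{L_k}\right\}$ in $\Hilbert_S$, so that $\sigma_{\infty}^2=0$.

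The genuinely delicate point is then $m_n\to a$. Writing the residual $S_L-\Mean(S_L\,|\,\Filter_n)=S_{\tilde L_n}$ with $\tilde L_n\in\Banach^{\ast}$, the least-squares identity underlying \eqref{eq:mean} gives $a-m_n=\langle u,\tilde L_n\rangle_{\Banach}$, while $\norm{S_{\tilde L_n}}_{\Hilbert_S}=\sigma_n\to0$. Hence the crux is to pass from smallness in $\Hilbert_S$ to smallness of the dual action on the fixed element $u$, which is exactly the content of the embedding $\Banach^{\ast}\hookrightarrow\Hilbert_S$ in Proposition~\ref{pro:SL-L-imbedding}; I would invoke that embedding (as a topological one, so that $\norm{\tilde L_n}_{\Banach^{\ast}}$ is controlled by $\norm{S_{\tilde L_n}}_{\Hilbert_S}$) to bound $\abs{\langle u,\tilde L_n\rangle_{\Banach}}\le\norm{u}_{\Banach}\norm{\tilde L_n}_{\Banach^{\ast}}\to0$ and conclude. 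If only continuity of $S$ (not of its inverse) is available, the cleaner route is to stay with the characteristic-function argument of the third paragraph, for which $\sigma_n^2\to0$ from the martingale step already secures $m_n\to a$ once $\nu_n\to\delta_a$ is granted.
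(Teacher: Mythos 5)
Your proposal is correct and, in its operative form, is the paper's own proof: the monotone-conditioning/Portmanteau/Skorokhod step of Proposition~\ref{pro:kerappr-conv} gives the weak convergence $\nu_n\to\delta_a$, and the Gaussian form of $\nu_n$ together with a characteristic-function argument extracts $s_{\vL_n,\vf_n}(L)\to Lu$ and $\sigma_{L|\vL_n}\to 0$ (the paper evaluates at $t=\sqrt{2}$; your moduli-then-phases version is the same step carried out slightly more carefully). Your supplementary martingale argument for $\sigma_{L|\vL_n}^2\to0$ and your concern about conditioning on measure-zero events are legitimate refinements that the paper does not address --- it silently takes the monotone limit of the regular conditional laws for granted --- but since your final fallback also grants $\nu_n\to\delta_a$ and then runs the characteristic-function step, the route is essentially identical to the paper's.
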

\begin{proof}
Let $a:=Lu$.
By the same method of the proof in Proposition \ref{pro:kerappr-conv},
equation \eqref{eq:kerappr-conv-cond-2} assures that we can construct the random variables $Z_n$ with the probability distributions
$\nu_n$ in equation \eqref{eq:kerappr-conv-0} for all $n\in\NN$ such that
$Z_n$ converges to $a$ almost surely when $n\to\infty$.
This shows that the characteristic function $\phi_{Z_n}(t)$ of $Z_n$ converges to $\exp(ita)$
pointwisely when $n\to\infty$, that is,
\begin{equation}\label{eq:kerappr-conv-inter-1}
\lim_{n\to\infty}\phi_{Z_n}(t)
=\lim_{n\to\infty}\int_{\RR}\exp(itz)\nu_n(\ud z)
=\int_{\RR}\exp(itz)\delta_a(\ud z)
=\exp(ita),
\end{equation}
for $t\in\RR$.

Now we use the characteristic functions to prove the convergence of $s_{\vL_n,\vf_n}(L)$.
Since $S_L$ and $\vS_{\vL_n}$ have the normal distributions discussed in the proof of Theorem \ref{thm:kerappr-inter}, we know that
\[
\nu_n(A_z)=\PP_{\Ker}^{\mu}\left(S_{L}\in A_z|\Aset_n\right)
=\PP_{\Ker}^{\mu}\left(S_{L}\in A_z|\vS_{\vL_n}=\vf_n\right)
=\int_{-\infty}^{z}p_{L|\vL_n}(z|\vf_n)\ud z,
\]
for $A_z:=(-\infty,z]$ and $z\in\RR$.
This shows that $p_{L|\vL_n}\left(z|\vf_n\right)$ is the probability density function of $Z_n$.
Thus, $\Mean(Z_n)=m_{L|\vL_n}\left(\vf_n\right)=s_{\vL_n,\vf_n}(L)$ and $\Var(Z_n)=\sigma_{L|\vL_n}^2$
so that the characteristic function $\phi_{Z_n}$ can be written as
\begin{equation}\label{eq:kerappr-conv-inter-2}
\phi_{Z_n}(t)
=\exp\left(its_{\vL_n,\vf_n}(L)-\sigma_{L|\vL_n}^2t^2/2\right),
\quad\text{for }t\in\RR,
\end{equation}
where $i:=\sqrt{-1}$.
Putting equation \eqref{eq:kerappr-conv-inter-2} into equation \eqref{eq:kerappr-conv-inter-1} for $t=\sqrt{2}$,
we have that
\[
\lim_{n\to\infty}
\exp\left(i\sqrt{2}s_{\vL_n,\vf_n}(L)-\sigma_{L|\vL_n}^2\right)
=
\exp\big(i\sqrt{2}a\big).
\]
Therefore, we conclude that
\[
\lim_{n\to\infty}s_{\vL_n,\vf_n}(L)=a=Lu,
\quad
\lim_{n\to\infty}\sigma_{L|\vL_n}=0.
\]
\end{proof}

If $\Banach^{\ast}$ is a separable normed space, then
there exists countable $\left\{L_n:n\in\NN\right\}\subseteq\Banach^{\ast}$ such that
$\Span\left\{L_n:n\in\NN\right\}$ is dense in $\Banach^{\ast}$.
By the density of the data, we can still verify the convergence.

\begin{corollary}\label{cor:kerappr-conv}
If $\Span\left\{L_n:n\in\NN\right\}$ is dense in $\Banach^{\ast}$, then for any $L\in\Banach^{\ast}$, the kernel-based estimator $s_{\vL_n,\vf_n}(L)$
converges to the exact value $Lu$ when $n\to\infty$.
\end{corollary}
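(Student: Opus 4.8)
The plan is to deduce this corollary directly from Theorem \ref{thm:kerappr-conv} by verifying its hypothesis, namely the nested chain in equation \eqref{eq:kerappr-conv-cond-2}. Since $\Aset_{n+1}$ is obtained from $\Aset_n$ by imposing the single additional constraint $\langle\omega,L_{n+1}\rangle_{\Banach}=f_{n+1}$, the inclusions $\Aset_1\supseteq\Aset_2\supseteq\cdots$ are automatic; and because $f_n=\langle u,L_n\rangle_{\Banach}$ by construction, we have $u\in\Aset_n$ for every $n$, hence $u\in\Aset_{\infty}$. Thus the only nontrivial point is the equality $\Aset_{\infty}=\{u\}$, that is, that no element of $\Banach$ other than $u$ satisfies all of the interpolation conditions.

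To establish $\Aset_{\infty}=\{u\}$, I would take an arbitrary $\omega\in\Aset_{\infty}$ and set $v:=\omega-u$. Membership in $\Aset_{\infty}$ means $\langle\omega,L_n\rangle_{\Banach}=f_n=\langle u,L_n\rangle_{\Banach}$ for all $n$, so $\langle v,L_n\rangle_{\Banach}=0$ for every $n\in\NN$. By linearity of the dual bilinear product, this propagates to $\langle v,L\rangle_{\Banach}=0$ for every $L\in\Span\{L_n:n\in\NN\}$.

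The crux is to upgrade this vanishing from the dense span to the whole dual space $\Banach^{\ast}$. Given any $L\in\Banach^{\ast}$ and $\varepsilon>0$, density supplies $L'\in\Span\{L_n:n\in\NN\}$ with $\norm{L-L'}_{\Banach^{\ast}}<\varepsilon$, and then
\[
\abs{\langle v,L\rangle_{\Banach}}=\abs{\langle v,L-L'\rangle_{\Banach}}\leq\norm{L-L'}_{\Banach^{\ast}}\norm{v}_{\Banach}<\varepsilon\norm{v}_{\Banach}.
\]
Letting $\varepsilon\to0$ forces $\langle v,L\rangle_{\Banach}=0$ for every $L\in\Banach^{\ast}$. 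Since the bounded linear functionals separate the points of a Banach space (a standard consequence of the Hahn--Banach theorem), this yields $v=0$, i.e. $\omega=u$. Hence $\Aset_{\infty}=\{u\}$, the nested chain of Theorem \ref{thm:kerappr-conv} holds, and therefore $s_{\vL_n,\vf_n}(L)\to Lu$ as $n\to\infty$.

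I expect no serious obstacle: the argument is a routine separation argument, and the only analytic ingredient beyond continuity of the functionals is the Hahn--Banach separation of points. The single place to be careful is that the passage from the dense span to all of $\Banach^{\ast}$ must use the boundedness (continuity) of the evaluation $L\mapsto\langle v,L\rangle_{\Banach}$, which is precisely why density of $\Span\{L_n:n\in\NN\}$ alone is enough to conclude $\Aset_{\infty}=\{u\}$.
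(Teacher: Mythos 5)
Your proof is correct and follows essentially the same route as the paper: verify the nested chain and $\Aset_{\infty}=\{u\}$ by the density of $\Span\{L_n:n\in\NN\}$ together with a triangle-inequality estimate, then invoke Theorem \ref{thm:kerappr-conv}. The only cosmetic difference is that you make the Hahn--Banach separation of points explicit, which the paper uses implicitly.
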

\begin{proof}
We primarily prove that equation \eqref{eq:kerappr-conv-cond-2} is true.
Since $\vrho_n(\vf_{n+1})=\vf_n$ for $n\in\NN$, we have that $\Aset_{n}\supseteq\Aset_{n+1}$.
Next, we will prove that $\Aset_{\infty}=\{u\}$. Since $\vf_n=\vL_nu$, we have that $u\in\Aset_n$.
Thus $u\in\Aset_{\infty}$.
We take any $P\in\Banach^{\ast}$ and any $\omega\in\Aset_{\infty}$.
If we verify that $P\omega=Pu$, then we know that $\omega=u$.
Let $\varepsilon>0$. Since $\Span\left\{L_n:n\in\NN\right\}$ is dense in $\Banach^{\ast}$, there exists $P_{\varepsilon}\in\Span\left\{L_n:n\in\NN\right\}$ such that $\norm{P_{\varepsilon}-P}_{\Banach^{\ast}}\leq\varepsilon$.
Since $\langle\omega,P_{\varepsilon}\rangle_{\Banach}=\langle u,P_{\varepsilon}\rangle_{\Banach}$, we know that
\[
\abs{\langle\omega-u,P\rangle_{\Banach}}\leq\abs{\langle\omega,P-P_{\varepsilon}\rangle_{\Banach}}
+\abs{\langle u,P_{\varepsilon}-P\rangle_{\Banach}}
\leq\norm{P_{\varepsilon}-P}_{\Banach^{\ast}}\left(\norm{\omega}_{\Banach}+\norm{u}_{\Banach}\right)
=\Order(\varepsilon).
\]
Taking $\varepsilon\to0$, we have that $\abs{\langle\omega-u,P\rangle_{\Banach}}=0$.
Therefore, $\Aset_{\infty}=\{u\}$.
According to Theorem \ref{thm:kerappr-conv}, we conclude the convergence of $s_{\vL_n,\vf_n}(L)$ to $Lu$.
\end{proof}

By Theorem \ref{thm:kerappr-inter-noise}, we look at the kernel-based estimators $\big\{s_{\vL_n,\hatvf_n,\epsilon_n}(L):n\in\NN\big\}$ conditioned on
the noisy data
$\big\{\big(\vL_n,\hatvf_n\big):n\in\NN\big\}$ and the noise margins $\left\{\epsilon_n:n\in\NN\right\}$.
Thus, we can use the noisy data $\big(\vL_n,\hatvf_n\big)$, the noise margin $\epsilon_n$, and the Gaussian process $S$ to construct the interpolation event $\Aset_n^{\epsilon_n}$ same as Example \ref{exa:noise-Hermite-Birkhoff}.
By the discussions in Corollary \ref{cor:kerappr-inter-coef-error}, the representation of $s_{\vL_n,\hatvf_n,\epsilon_n}(L)$ indicates that the noise margin $\epsilon_n$ and the eigenvalues of $\vA_{\Ker,\vL_n}$ are required to be correlated for the proof of convergence.
Let $\Aset_{\infty}^{0}:=\cap_{n\in\NN}\Aset_n^{\epsilon_n}$.

\begin{theorem}\label{thm:kerappr-conv-noise}
If the interpolation events $\left\{\Aset_n^{\epsilon_n}:n\in\NN\right\}$ based on the noisy data $\big\{\big(\vL_n,\hatvf_n\big):n\in\NN\big\}$ and the noise margins $\left\{\epsilon_n:n\in\NN\right\}$ same as Example \ref{exa:noise-Hermite-Birkhoff} satisfy that
\begin{equation}\label{eq:kerappr-conv-cond-3}
\Aset_1^{\epsilon_1}\supseteq\ldots\supseteq\Aset_n^{\epsilon_n}\supseteq\ldots\supseteq
\Aset_{\infty}^{0}=\left\{u\right\}.
\end{equation}
and the noise margins $\left\{\epsilon_n:n\in\NN\right\}$ satisfy that
\begin{equation}\label{eq:kerappr-conv-cond-4}
\lim_{n\to\infty}\frac{\epsilon_n^2}{\lambda_{\min}\big(\vA_{\Ker,\vL_n}\big)}=0,
\end{equation}
then for any $L\in\Banach^{\ast}$, the kernel-based estimator $s_{\vL_n,\hatvf_n,\epsilon_n}(L)$
converges to the exact value $Lu$ when $n\to\infty$.
\end{theorem}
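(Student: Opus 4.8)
The plan is to control the noisy estimator by comparing it with its exact-data counterpart through the triangle inequality
\[
\abs{s_{\vL_n,\hatvf_n,\epsilon_n}(L)-Lu}
\leq
\abs{s_{\vL_n,\hatvf_n,\epsilon_n}(L)-s_{\vL_n,\vf_n}(L)}
+\abs{s_{\vL_n,\vf_n}(L)-Lu},
\]
where $s_{\vL_n,\vf_n}(L)$ is the non-noise estimator built from the exact values $\vf_n:=\vL_nu$. I would then send each of the two summands to zero by separate arguments, the second through an earlier convergence theorem and the first through the quantitative bound on the noise.

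For the second summand, I would first verify that the non-noise events $\Aset_n:=\left\{\omega\in\Banach:\vL_n\omega=\vf_n\right\}$ satisfy the nested structure required by Theorem \ref{thm:kerappr-conv}. Since $\vrho_n(\vf_{n+1})=\vf_n$ we have $\Aset_n\supseteq\Aset_{n+1}$, and $u\in\Aset_n$ for every $n$. Because $\norm{\vf_n-\hatvf_n}_2\leq\epsilon_n$ forces $\Aset_n\subseteq\Aset_n^{\epsilon_n}$ (Example \ref{exa:noise-Hermite-Birkhoff}), the hypothesis $\Aset_{\infty}^{0}=\left\{u\right\}$ yields $\cap_{n\in\NN}\Aset_n\subseteq\Aset_{\infty}^{0}=\left\{u\right\}$, so together with $u\in\cap_{n\in\NN}\Aset_n$ we obtain $\Aset_{\infty}=\left\{u\right\}$. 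Theorem \ref{thm:kerappr-conv} then gives $s_{\vL_n,\vf_n}(L)\to Lu$.

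For the first summand, Theorems \ref{thm:kerappr-inter} and \ref{thm:kerappr-inter-noise} give the difference
\[
s_{\vL_n,\vf_n}(L)-s_{\vL_n,\hatvf_n,\epsilon_n}(L)
=\vb_{\Ker,\vL_n}(L)^T\vA_{\Ker,\vL_n}^{\dag}\ve_n,
\quad
\ve_n:=\vf_n-\veta_{\Ker,\vL_n}^{\epsilon_n}\big(\hatvf_n\big),
\]
and, exactly as in Corollary \ref{cor:kerappr-inter-coef-error}, both $\vf_n$ and $\veta_{\Ker,\vL_n}^{\epsilon_n}\big(\hatvf_n\big)$ lie in $B_n\big(\hatvf_n,\epsilon_n\big)$, so $\norm{\ve_n}_2\leq2\epsilon_n$. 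The decisive step is to pair $\vb_{\Ker,\vL_n}(L)$ and $\ve_n$ in the inner product induced by the symmetric positive semidefinite matrix $\vA_{\Ker,\vL_n}^{\dag}$ rather than in the Euclidean one: by the Cauchy--Schwarz inequality,
\[
\abs{\vb_{\Ker,\vL_n}(L)^T\vA_{\Ker,\vL_n}^{\dag}\ve_n}
\leq
\sqrt{\vb_{\Ker,\vL_n}(L)^T\vA_{\Ker,\vL_n}^{\dag}\vb_{\Ker,\vL_n}(L)}\,
\sqrt{\ve_n^T\vA_{\Ker,\vL_n}^{\dag}\ve_n}.
\]
The first factor is at most $\sqrt{\Ker(L,L)}$, uniformly in $n$, because the conditional variance $\sigma_{L|\vL_n}^2$ in \eqref{eq:var} is nonnegative; the second factor is at most $\sqrt{\norm{\vA_{\Ker,\vL_n}^{\dag}}_2}\,\norm{\ve_n}_2\leq2\epsilon_n/\sqrt{\lambda_{\min}\big(\vA_{\Ker,\vL_n}\big)}$. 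Hence the difference is bounded by $2\sqrt{\Ker(L,L)}\sqrt{\epsilon_n^2/\lambda_{\min}\big(\vA_{\Ker,\vL_n}\big)}$, which tends to zero by hypothesis \eqref{eq:kerappr-conv-cond-4}.

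I expect the main obstacle to be precisely this gap between the naive and the sharp estimate. The direct bound $\norm{\vb_{\Ker,\vL_n}(L)}_2\norm{\vA_{\Ker,\vL_n}^{\dag}}_2\norm{\ve_n}_2$ only yields a factor $\epsilon_n/\lambda_{\min}\big(\vA_{\Ker,\vL_n}\big)$ and leaves $\norm{\vb_{\Ker,\vL_n}(L)}_2$ uncontrolled as $n\to\infty$, so it does not match the hypothesis. Recognizing that the $\vA_{\Ker,\vL_n}^{\dag}$-weighted splitting, combined with the uniform variance bound $\vb_{\Ker,\vL_n}(L)^T\vA_{\Ker,\vL_n}^{\dag}\vb_{\Ker,\vL_n}(L)\leq\Ker(L,L)$, is what converts the weaker assumption $\epsilon_n^2/\lambda_{\min}\to0$ into convergence is the crux of the argument.
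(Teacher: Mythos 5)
Your proof is correct, but it takes a genuinely different route from the paper. The paper re-runs the weak-convergence machinery of Theorem \ref{thm:kerappr-conv} directly on the noisy events: it forms the conditional laws $\nu_n(A)=\PP_{\Ker}^{\mu}\big(S_L\in A\,|\,\vS_{\vL_n}\in B_n\big(\hatvf_n,\epsilon_n\big)\big)$, invokes Skorokhod representation to get $\phi_{Z_n}(t)\to\exp(ita)$, and then uses a mean value theorem to write $\phi_{Z_n}(t)=\exp\big(itm_{L|\vL_n}(\vzeta_n)-\sigma_{L|\vL_n}^2t^2/2\big)$ for some intermediate point $\vzeta_n\in B_n\big(\hatvf_n,\epsilon_n\big)$, concluding $m_{L|\vL_n}(\vzeta_n)\to Lu$; finally it compares $s_{\vL_n,\hatvf_n,\epsilon_n}(L)$ with $m_{L|\vL_n}(\vzeta_n)$. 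You instead compare with the explicit exact-data estimator $s_{\vL_n,\vf_n}(L)=m_{L|\vL_n}(\vf_n)$, and you discharge that term by verifying the hypotheses of Theorem \ref{thm:kerappr-conv} for the non-noise events (using $\Aset_n\subseteq\Aset_n^{\epsilon_n}$, $u\in\Aset_n$, and the nestedness $\vf_n=\vrho_n(\vf_{n+1})$, which relies on the section's standing convention that $\vL_n=(L_1,\cdots,L_n)^T$ is nested). The decisive quantitative step is identical in both arguments: the Cauchy--Schwarz inequality in the $\vA_{\Ker,\vL_n}^{\dag}$-inner product together with $\vb_{\Ker,\vL_n}(L)^T\vA_{\Ker,\vL_n}^{\dag}\vb_{\Ker,\vL_n}(L)\leq\Ker(L,L)$ and $\norm{\ve_n}_2\leq2\epsilon_n$, which is exactly what converts hypothesis \eqref{eq:kerappr-conv-cond-4} into convergence (the paper's equations \eqref{eq:kerappr-conv-noise-5}--\eqref{eq:kerappr-conv-noise-8}, with your $\ve_n=\vf_n-\veta_{\Ker,\vL_n}^{\epsilon_n}\big(\hatvf_n\big)$ in place of the paper's $\veta_{\Ker,\vL_n}^{\epsilon_n}\big(\hatvf_n\big)-\vzeta_n$). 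What your route buys is modularity and robustness: Theorem \ref{thm:kerappr-conv} is reused as a black box, you avoid repeating the Portmanteau/Skorokhod argument, and your comparison point is explicit rather than an abstractly existing $\vzeta_n$; in particular you sidestep the paper's mean-value-theorem step, which is delicate because it is applied to a complex-valued integrand. The paper's route, in exchange, stays entirely inside the noisy setting and uses hypothesis \eqref{eq:kerappr-conv-cond-3} directly without passing through the non-noise events.
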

\begin{proof}
We will prove the convergence of $s_{\vL_n,\hatvf_n,\epsilon_n}(L)$ by the same method of the proof of Theorem~\ref{thm:kerappr-conv}. Let $a:=Lu$.
We replace $\Aset_n$ to $\Aset_n^{\epsilon_n}$ in Theorem \ref{thm:kerappr-conv}. Equation \eqref{eq:kerappr-conv-cond-3} assures that we can construct the random variables $Z_n$ with the probability distributions
$\nu_n$ for all $n\in\NN$ such that
the characteristic function $\phi_{Z_n}(t)$ of $Z_n$ converges to $\exp(ita)$
pointwisely when $n\to\infty$. Here, the probability measure $\nu_n$ is redefined by
\[
\nu_n(A):=\PP_{\Ker}^{\mu}\left(S_{L}\in A\big|\vS_{\vL_n}\in B_n\big(\hatvf_n,\epsilon_n\big)\right),
\quad
\text{for any open set }A\text{ in }\RR.
\]
Thus, the probability density function $p_{Z_n}$ of $Z_n$ can be written as
\[
p_{Z_n}(z):=
\frac{\int_{B_n(\hatvf_n,\epsilon_n)}p_{L,\vL_n}(z,\vv)\ud\vv}
{\int_{B_n(\hatvf_n,\epsilon_n)}p_{\vL_n}(\vv)\ud\vv},
\quad\text{for }z\in\RR.
\]

Next, we compute the characteristic function
\begin{equation}\label{eq:kerappr-conv-noise-1}
\phi_{Z_n}(t)=\Mean(\exp(itZ_n))
=\int_{\RR}\exp(itz)p_{Z_n}(z)\ud z,
\quad
\text{for }t\in\RR.
\end{equation}
According to the mean value theorem,
there exists $\vzeta_n\in B_n\big(\hatvf_n,\epsilon_n\big)$ such that
\begin{equation}\label{eq:kerappr-conv-noise-2}
\begin{split}
&\int_{\RR}\int_{B_n(\hatvf_n,\epsilon_n)}\exp(itz)p_{L,\vL_n}(z,\vv)\ud\vv\ud z\\
=&\int_{B_n(\hatvf_n,\epsilon_n)}
p_{\vL_n}(\vv)\int_{\RR}\exp(itz)p_{L|\vL_n}(z|\vv)\ud z\ud\vv\\
=&\int_{B_n(\hatvf_n,\epsilon_n)}
\exp\left(itm_{L|\vL_n}\left(\vv\right)-\sigma_{L|\vL_n}^2t^2/2\right)p_{\vL_n}(\vv)\ud\vv\\
=&\exp\left(itm_{L|\vL_n}\big(\vzeta_n\big)-\sigma_{L|\vL_n}^2t^2/2\right)
\int_{B_n(\hatvf_n,\epsilon_n)}
p_{\vL_n}(\vv)\ud\vv
\end{split}.
\end{equation}
Putting equation \eqref{eq:kerappr-conv-noise-2} into equation \eqref{eq:kerappr-conv-noise-1},
we have that
\[
\phi_{Z_n}(t)=\exp\left(itm_{L|\vL_n}\big(\vzeta_n\big)-\sigma_{L|\vL_n}^2t^2/2\right).
\]
This shows that
\begin{equation}\label{eq:kerappr-conv-noise-3}
\lim_{n\to\infty}m_{L|\vL_n}\big(\vzeta_n\big)=a.
\end{equation}
Let $\ve_n:=\veta_{\Ker,\vL_n}^{\epsilon_n}\big(\hatvf_n\big)-\vzeta_n$.
The symmetric positive definite matrix $\vA_{\Ker,\vL_n}^{\dag}$ guarantees that
\begin{equation}\label{eq:kerappr-conv-noise-5}
\begin{split}
&\abs{s_{\vL_n,\hatvf_n,\epsilon_n}(L)-m_{L|\vL_n}\big(\vzeta_n\big)}^2
=
\abs{\vb_{\Ker,\vL_n}(L)^T\vA_{\Ker,\vL_n}^{\dag}\ve_n}^2\\
\leq&\left(\vb_{\Ker,\vL_n}(L)^T\vA_{\Ker,\vL_n}^{\dag}\vb_{\Ker,\vL_n}(L)\right)
\left(\ve_n^T\vA_{\Ker,\vL_n}^{\dag}\ve_n\right).
\end{split}
\end{equation}
Since $\sigma_{L|\vL_n}^2\geq0$, we have that
\begin{equation}\label{eq:kerappr-conv-noise-6}
\vb_{\Ker,\vL_n}(L)^T\vA_{\Ker,\vL_n}^{\dag}\vb_{\Ker,\vL_n}(L)\leq\Ker(L,L).
\end{equation}
Moreover, since $\veta_{\Ker,\vL_n}^{\epsilon_n}\big(\hatvf_n\big),\vzeta_n\in B_n\big(\hatvf_n,\epsilon_n\big)$,
we know that
\[
\norm{\ve_n}_{2}\leq2\epsilon_n.
\]
This shows that
\begin{equation}\label{eq:kerappr-conv-noise-7}
\ve_n^T\vA_{\Ker,\vL_n}^{\dag}\ve_n
\leq
\frac{\norm{\ve_n}_2^2}{\lambda_{\min}\big(\vA_{\Ker,\vL_n}\big)}
\leq
\frac{4\epsilon_n^2}{\lambda_{\min}\big(\vA_{\Ker,\vL_n}\big)}.
\end{equation}
Putting equations \eqref{eq:kerappr-conv-noise-6} and \eqref{eq:kerappr-conv-noise-7}
into equation \eqref{eq:kerappr-conv-noise-5}, we have that
\begin{equation}\label{eq:kerappr-conv-noise-8}
\abs{s_{\vL_n,\hatvf_n,\epsilon_n}(L)-m_{L|\vL_n}\big(\vzeta_n\big)}^2
\leq\frac{4\Ker(L,L)\epsilon_n^2}{\lambda_{\min}\big(\vA_{\Ker,\vL_n}\big)}.
\end{equation}
Taking the limits of the both sides of equation \eqref{eq:kerappr-conv-noise-8}, the limit condition in equations \eqref{eq:kerappr-conv-cond-4} shows that
\begin{equation}\label{eq:kerappr-conv-noise-9}
\lim_{n\to\infty}s_{\vL_n,\hatvf_n,\epsilon_n}(L)=\lim_{n\to\infty}m_{L|\vL_n}\big(\vzeta_n\big).
\end{equation}
Therefore, combining equations \eqref{eq:kerappr-conv-noise-3} and \eqref{eq:kerappr-conv-noise-9},
we conclude that
\[
\lim_{n\to\infty}s_{\vL_n,\hatvf_n,\epsilon_n}(L)=a=Lu.
\]
\end{proof}

\begin{corollary}\label{cor:kerappr-conv-noise}
Suppose that
\begin{equation}\label{eq:noise-data-cond}
\hatvf_{n}+B_n\big(0,\epsilon_n\big)\supseteq
\vrho_{n}\big(\hatvf_{n+1}\big)+B_n\big(0,\epsilon_{n+1}\big),
\quad\text{for all }n\in\NN.
\end{equation}
If $\Span\left\{L_n:n\in\NN\right\}$ is dense in $\Banach^{\ast}$
and the noise margins $\left\{\epsilon_n:n\in\NN\right\}$ satisfy that
\[
\lim_{n\to\infty}\epsilon_n=0
\text{ and }
\lim_{n\to\infty}\frac{\epsilon_n^2}{\lambda_{\min}\big(\vA_{\Ker,\vL_n}\big)}=0,
\]
then for any $L\in\Banach^{\ast}$, the kernel-based estimator $s_{\vL_n,\hatvf_n,\epsilon_n}(L)$
converges to the exact value $Lu$ when $n\to\infty$.
\end{corollary}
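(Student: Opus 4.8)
The plan is to deduce the claim directly from Theorem~\ref{thm:kerappr-conv-noise} by checking its two hypotheses \eqref{eq:kerappr-conv-cond-3} and \eqref{eq:kerappr-conv-cond-4}; once both hold the stated convergence is immediate. Condition \eqref{eq:kerappr-conv-cond-4} is assumed outright among the hypotheses of the corollary, so all the work lies in establishing \eqref{eq:kerappr-conv-cond-3}, namely the decreasing nesting $\Aset_1^{\epsilon_1}\supseteq\cdots\supseteq\Aset_n^{\epsilon_n}\supseteq\cdots$ together with the identity $\Aset_\infty^0=\{u\}$. This parallels the proof of Corollary~\ref{cor:kerappr-conv} in the non-noise case, with the geometric condition \eqref{eq:noise-data-cond} now playing the role that $\vrho_n(\vf_{n+1})=\vf_n$ played there.

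For the nesting I would exploit that $\vL_n$ consists of the first $n$ coordinates of $\vL_{n+1}$, so $\vrho_n\circ\vL_{n+1}=\vL_n$. Given $\omega\in\Aset_{n+1}^{\epsilon_{n+1}}$, by definition $\vL_{n+1}\omega\in\hatvf_{n+1}+B_{n+1}(0,\epsilon_{n+1})$; applying the (linear) projection $\vrho_n$ and using that the image of the Euclidean ball $B_{n+1}(0,\epsilon_{n+1})$ under $\vrho_n$ is exactly $B_n(0,\epsilon_{n+1})$, I obtain $\vL_n\omega\in\vrho_n(\hatvf_{n+1})+B_n(0,\epsilon_{n+1})$. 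Hypothesis \eqref{eq:noise-data-cond} then places this set inside $\hatvf_n+B_n(0,\epsilon_n)=B_n(\hatvf_n,\epsilon_n)$, so $\omega\in\Aset_n^{\epsilon_n}$, which gives $\Aset_n^{\epsilon_n}\supseteq\Aset_{n+1}^{\epsilon_{n+1}}$.

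For the identity $\Aset_\infty^0=\{u\}$, I first note $u\in\Aset_n^{\epsilon_n}$ for every $n$: since $\hatvf_n=\vL_n u+\vxi_n$ with $\vxi_n\in B_n(0,\epsilon_n)$, we have $\vL_n u\in B_n(\hatvf_n,\epsilon_n)$, whence $u\in\Aset_\infty^0$. For the reverse inclusion, take any $\omega\in\Aset_\infty^0$; then both $\omega$ and $u$ lie in every $\Aset_n^{\epsilon_n}$, so the triangle inequality yields $\norm{\vL_n\omega-\vL_n u}_2\leq 2\epsilon_n$. Because $\epsilon_n\to0$, each fixed coordinate forces $\langle\omega,L_k\rangle_{\Banach}=\langle u,L_k\rangle_{\Banach}$, i.e.\ $\langle\omega-u,L_k\rangle_{\Banach}=0$ for all $k$. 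The density of $\Span\{L_n:n\in\NN\}$ in $\Banach^{\ast}$ then lets me approximate an arbitrary $P\in\Banach^{\ast}$ by elements of the span and conclude $\langle\omega-u,P\rangle_{\Banach}=0$ for all $P$, exactly as in Corollary~\ref{cor:kerappr-conv}, so $\omega=u$.

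The main obstacle I expect is the bookkeeping in the nesting step: one must correctly identify the projection of the $(n+1)$-dimensional ball as the $n$-dimensional ball of the same radius and chain the set inclusions in the right direction so that hypothesis \eqref{eq:noise-data-cond} applies, as this is the only place the specific geometric assumption on the noisy data enters. With \eqref{eq:kerappr-conv-cond-3} and \eqref{eq:kerappr-conv-cond-4} both in hand, Theorem~\ref{thm:kerappr-conv-noise} yields $\lim_{n\to\infty}s_{\vL_n,\hatvf_n,\epsilon_n}(L)=Lu$.
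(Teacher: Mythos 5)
Your proposal is correct and follows essentially the same route as the paper: verify the nesting condition \eqref{eq:kerappr-conv-cond-3} from \eqref{eq:noise-data-cond}, show $\Aset_{\infty}^{0}=\{u\}$ via the coordinate bounds $\norm{\vL_n\omega-\vL_n u}_2\leq 2\epsilon_n$ together with $\epsilon_n\to 0$ and the density argument of Corollary~\ref{cor:kerappr-conv}, and then invoke Theorem~\ref{thm:kerappr-conv-noise}. You merely spell out details the paper leaves implicit (the projection of the $(n+1)$-ball under $\vrho_n$ and the passage from individual coordinates to the span), which is a welcome refinement rather than a deviation.
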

\begin{proof}
If we prove that equation \eqref{eq:kerappr-conv-cond-3} is true, then the convergence of $s_{\vL_n,\hatvf_n,\epsilon_n}(L)$ can be guaranteed by Theorem \ref{thm:kerappr-conv-noise}.
Equation \eqref{eq:noise-data-cond} shows that
$\Aset_{n}^{\epsilon_{n}}\supseteq\Aset_{n+1}^{\epsilon_{n+1}}$ for all $n\in\NN$.
Since $\vL_nu\in B_n\big(\hatvf_n,\epsilon_n\big)$, we have that $u\in\Aset_{n}^{\epsilon_{n}}$ for all $n\in\NN$.
Thus, $u\in\Aset_{\infty}^{0}$.
We take any $\omega\in\Aset_{\infty}^{0}$.
For $P\in\Span\left\{L_n:n\in\NN\right\}$, equation \eqref{eq:noise-data-cond} assures that $\abs{\langle\omega-u,P\rangle_{\Banach}}=\Order(\epsilon_n)$ for all $n\in\NN$.
Since $\epsilon_n\to0$ when $n\to\infty$, we conclude that $\abs{\langle\omega-u,P\rangle_{\Banach}}=0$.
This shows that $P\omega=Pu$.
By the same method of Corollary~\ref{cor:kerappr-conv}, we can verify that $\omega=u$.
This shows that $\Aset_{\infty}^{0}=\{u\}$.
Therefore, equation \eqref{eq:kerappr-conv-cond-3} is true.
\end{proof}

\subsection{Error Bounds of Kernel-based Estimators}\label{sec:ErrorKerAppr}

Finally, we investigate the error bounds of $Lu-s_{\vL_n,\vf_n}(L)$ and
$Lu-s_{\vL_n,\hatvf_n,\epsilon_n}(L)$ for a special class of the solutions $u$, respectively.
By Proposition \ref{pro:SL-L}, we have that
\[
\int_{\Banach}\norm{\omega S_L(\omega)}_{\Banach}\PP_{\Ker}^{\mu}(\ud\omega)
\leq C\norm{L}_{\Banach^{\ast}}<\infty,
\quad\text{for }L\in\Banach^{\ast}.
\]
This shows that the map
\[
\Gamma_{\Ker}^{\mu}(S_L):=\int_{\Banach}\omega S_L(\omega)\PP_{\Ker}^{\mu}(\ud\omega),
\]
is a bounded linear operator from $\Span\left\{S_L:L\in\Banach^{\ast}\right\}$ into $\Banach$.
By the competition $\Hilbert_S$ of $\Span\left\{S_L:L\in\Banach^{\ast}\right\}$, the map $\Gamma_{\Ker}^{\mu}$ can be extended to the Hilbert space $\Hilbert_S$, that is,
\begin{equation}\label{eq:Gamma-map}
\Gamma_{\Ker}^{\mu}(V):=\int_{\Banach}\omega V(\omega)\PP_{\Ker}^{\mu}(\ud\omega),\quad
\text{for }V\in\Hilbert_S.
\end{equation}
This shows that the range of $\Gamma_{\Ker}^{\mu}$ is a subspace of $\Banach$.
Moreover, the map $\Gamma_{\Ker}^{\mu}$ is the adjoint operator of the imbedding map from $\Banach^{\ast}$ into $\Hilbert_S$ in Proposition \ref{pro:SL-L-imbedding}.
By the structure theorem of Gaussian measures in \cite[Lemma 2.1]{Kuelbs1976} and \cite[Lemma 8.2.3]{Stroock1993},
if there exists $U\in\Hilbert_S$ such that $u=\Gamma_{\Ker}^{\mu}(U)$, then
\begin{equation}\label{eq:dual-mean-U-u}
\langle u,L\rangle_{\Banach}=\int_{\Banach}\langle\omega,L\rangle_{\Banach}U(\omega)\PP_{\Ker}^{\mu}(\ud\omega)
=
\int_{\Banach}S_L(\omega)U(\omega)\PP_{\Ker}^{\mu}(\ud\omega)
=\Mean(S_LU).
\end{equation}
Based on the map $\Gamma_{\Ker}^{\mu}$, we verify that the errors of $s_{\vL_n,\vf_n}(L)$ and $s_{\vL_n,\hatvf_n,\epsilon_n}(L)$ can be
bounded by $\sigma_{L|\vL_n}$ in equation \eqref{eq:var}, respectively.
We define a vector function
\[
\vbeta_{\Ker,\vL_n}(L):=\vA_{\Ker,\vL_n}^{\dag}\vb_{\Ker,\vL_n}(L),
\quad\text{for }L\in\Banach^{\ast}.
\]
Thus, $\vbeta_{\Ker,\vL_n}$ is a map from $\Banach^{\ast}$ into $\RR^n$.
Since $\vA_{\Ker,\vL_n}$ is a symmetric positive definite matrix,
the vector $\vbeta_{\Ker,\vL_n}(L)$ is the least-squared solution of the linear system $\vA_{\Ker,\vL_n}\vz=\vb_{\Ker,\vL_n}(L)$.

\begin{theorem}\label{thm:kerappr-error}
If the exact solution $u\in\range\big(\Gamma_{\Ker}^{\mu}\big)$, then for any $L\in\Banach^{\ast}$, the kernel-based estimator $s_{\vL_n,\vf_n}(L)$ has the error bound
\begin{equation}\label{eq:kerappr-error-bound-1}
\abs{Lu-s_{\vL_n,\vf_n}(L)}\leq C_1\sigma_{L|\vL_n}+C_2\abs{L\mu-\vbeta_{\Ker,\vL_n}(L)^T\vL_n\mu},
\end{equation}
where the constants $C_1\geq0$ and $C_2\geq0$ are independent of $L$, $\vL_n$, and $\vf_n$.
\end{theorem}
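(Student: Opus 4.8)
The plan is to exploit the structure-theorem identity \eqref{eq:dual-mean-U-u}, which converts the dual pairing with $u$ into a second moment against a fixed random variable. Since $u\in\range\big(\Gamma_{\Ker}^{\mu}\big)$, I would fix $U\in\Hilbert_S$ with $u=\Gamma_{\Ker}^{\mu}(U)$; then \eqref{eq:dual-mean-U-u} gives $Lu=\Mean(S_LU)$ for every $L\in\Banach^{\ast}$, and applied componentwise to $\vL_n$ it gives $\vf_n=\vL_nu=\Mean(\vS_{\vL_n}U)$. Using $\vbeta_{\Ker,\vL_n}(L)^T=\vb_{\Ker,\vL_n}(L)^T\vA_{\Ker,\vL_n}^{\dag}$ (by symmetry of the pseudoinverse), I would rewrite the estimator of Theorem \ref{thm:kerappr-inter} as $s_{\vL_n,\vf_n}(L)=L\mu+\vbeta_{\Ker,\vL_n}(L)^T(\vf_n-\vL_n\mu)$ and substitute $\vf_n=\Mean(\vS_{\vL_n}U)$, so that the whole error becomes a single expectation.

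The key algebraic step is to introduce the residual functional $L':=L-\sum_{j=1}^n\beta_jL_j\in\Banach^{\ast}$, where $\vbeta_{\Ker,\vL_n}(L)=(\beta_1,\ldots,\beta_n)^T$, so that linearity of $S$ yields $S_{L'}=S_L-\vbeta_{\Ker,\vL_n}(L)^T\vS_{\vL_n}$ and $L'\mu=L\mu-\vbeta_{\Ker,\vL_n}(L)^T\vL_n\mu$. Combining the displays above collapses the error to $Lu-s_{\vL_n,\vf_n}(L)=\Mean(S_{L'}U)-L'\mu$. Writing $\Mean(S_{L'}U)=\Cov(S_{L'},U)+(L'\mu)\Mean(U)$ (using $\Mean(S_{L'})=L'\mu$) gives the clean split $Lu-s_{\vL_n,\vf_n}(L)=\Cov(S_{L'},U)+(L'\mu)(\Mean(U)-1)$, where the second factor is exactly $L\mu-\vbeta_{\Ker,\vL_n}(L)^T\vL_n\mu$.

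It then remains to control the covariance term by $\sigma_{L|\vL_n}$. Since $\Ker$ is the symmetric bilinear covariance of $S$, I would compute $\Var(S_{L'})=\Ker(L',L')=\Ker(L,L)-2\vbeta_{\Ker,\vL_n}(L)^T\vb_{\Ker,\vL_n}(L)+\vbeta_{\Ker,\vL_n}(L)^T\vA_{\Ker,\vL_n}\vbeta_{\Ker,\vL_n}(L)$; substituting $\vbeta_{\Ker,\vL_n}(L)=\vA_{\Ker,\vL_n}^{\dag}\vb_{\Ker,\vL_n}(L)$ and using $\vA_{\Ker,\vL_n}^{\dag}\vA_{\Ker,\vL_n}\vA_{\Ker,\vL_n}^{\dag}=\vA_{\Ker,\vL_n}^{\dag}$, both $\vbeta_{\Ker,\vL_n}(L)^T\vb_{\Ker,\vL_n}(L)$ and $\vbeta_{\Ker,\vL_n}(L)^T\vA_{\Ker,\vL_n}\vbeta_{\Ker,\vL_n}(L)$ reduce to $\vb_{\Ker,\vL_n}(L)^T\vA_{\Ker,\vL_n}^{\dag}\vb_{\Ker,\vL_n}(L)$, so $\Var(S_{L'})=\Ker(L,L)-\vb_{\Ker,\vL_n}(L)^T\vA_{\Ker,\vL_n}^{\dag}\vb_{\Ker,\vL_n}(L)=\sigma_{L|\vL_n}^2$ by \eqref{eq:var}. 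Cauchy--Schwarz for covariance then gives $\abs{\Cov(S_{L'},U)}\leq\sqrt{\Var(S_{L'})\Var(U)}=\sqrt{\Var(U)}\,\sigma_{L|\vL_n}$, and the triangle inequality yields the claim with $C_1:=\sqrt{\Var(U)}$ and $C_2:=\abs{\Mean(U)-1}$, both finite (as $U\in\Hilbert_S\subseteq\Leb_2$) and depending only on $U$, hence on $u$, $\mu$, $\Ker$, and not on $L$, $\vL_n$, or $\vf_n$.

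The main difficulty here is bookkeeping rather than depth: one must carry the nonzero mean $\mu$ faithfully, since it is precisely what produces the second, $C_2$ term, and must confirm $S_{L'}\in\Hilbert_S$ (immediate, as $L'\in\Banach^{\ast}$) so the covariance pairing is legitimately governed by the $\Hilbert_S$ inner product. The only genuine computation is the identity $\Ker(L',L')=\sigma_{L|\vL_n}^2$, which rests on the pseudoinverse relation above together with $\vbeta_{\Ker,\vL_n}(L)$ being exactly the least-squares projection coefficient. As a sanity check, in the centered case $\mu=0$ one has $\Mean(U)=0$, so $C_2=1$ and the bound reduces to $\abs{Lu-s_{\vL_n,\vf_n}(L)}\leq\sqrt{\Var(U)}\,\sigma_{L|\vL_n}$, the familiar power-function error estimate.
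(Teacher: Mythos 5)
Your proposal is correct and follows essentially the same route as the paper's proof: both rest on the structure-theorem identity \eqref{eq:dual-mean-U-u}, split the error into a mean-zero term plus $(\Mean(U)-1)\big(L\mu-\vbeta_{\Ker,\vL_n}(L)^T\vL_n\mu\big)$ (your $\Cov(S_{L'},U)$ is exactly the paper's $\Mean\big((Y_L-\vbeta_{\Ker,\vL_n}(L)^T\vY_{\vL_n})U\big)$ with $Y_L:=S_L-L\mu$), identify the residual's variance with $\sigma_{L|\vL_n}^2$ via the pseudo-inverse identities, and finish with Cauchy--Schwarz. The only, harmless, difference is that your $C_1=\sqrt{\Var(U)}$ is marginally sharper than the paper's $C_1=\sqrt{\Mean\abs{U}^2}$.
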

\begin{proof}
Since $u\in\range\big(\Gamma_{\Ker}^{\mu}\big)$, there exists $U\in\Hilbert_S$ such that $u=\Gamma_{\Ker}^{\mu}(U)$.
Let $Y_L:=S_L-L\mu$ and $\vY_{\vL_n}:=\vS_{\vL_n}-\vL_n\mu$.
Thus, $Y_L\sim\Normal(0,\Ker(L,L))$,
$\vY_{\vL_n}\sim\Normal\big(0,\vA_{\Ker,\vL_n}\big)$, and $\Mean\left(Y_L\vY_{\vL_n}\right)=\vb_{\Ker,\vL_n}(L)$.
Let $\gamma:=\Mean(U)$.
Equation \eqref{eq:dual-mean-U-u} shows that
\begin{equation}\label{eq:kerappr-error-1}
Lu-\gamma L\mu=\Mean(S_LU)-\Mean(L\mu U)=\Mean(Y_{L}U),
\end{equation}
and
\begin{equation}\label{eq:kerappr-error-2}
\vf_n-\gamma\vL_n\mu=
\vL_nu-\gamma\vL_n\mu=
\Mean(\vS_{\vL_n}U)-\Mean(\vL_n\mu U)=\Mean(\vY_{\vL_n}U).
\end{equation}
Since $s_{\vL_n,\vf_n}(L)=L\mu+\vbeta_{\Ker,\vL_n}(L)^T(\vf_n-\vL_n\mu)$,
we have that
\begin{equation}\label{eq:kerappr-error-3}
Lu-s_{\vL_n,\vf_n}(L)
=(Lu-\gamma L\mu)-\vbeta_{\Ker,\vL_n}(L)^T(\vf_n-\gamma\vL_n\mu)
+(\gamma-1)\left(L\mu-\vbeta_{\Ker,\vL_n}(L)^T\vL_n\mu\right).
\end{equation}
Putting equations \eqref{eq:kerappr-error-1} and \eqref{eq:kerappr-error-2} into equation \eqref{eq:kerappr-error-3},
we know that
\begin{equation}\label{eq:kerappr-error-4}
\begin{split}
&\abs{Lu-s_{\vL_n,\vf_n}(L)}
=
\abs{\Mean\left(\left(Y_L-\vbeta_{\Ker,\vL_n}(L)^T\vY_{\vL_n}\right)U\right)
+(\gamma-1)\left(L\mu-\vbeta_{\Ker,\vL_n}(L)^T\vL_n\mu\right)}\\
\leq&\sqrt{\Mean\abs{U}^2}\sqrt{\Mean\abs{Y_L-\vbeta_{\Ker,\vL_n}(L)^T\vY_{\vL_n}}^2}
+\abs{\gamma-1}\abs{L\mu-\vbeta_{\Ker,\vL_n}(L)^T\vL_n\mu}.
\end{split}
\end{equation}
Moreover, we compute the mean square
\begin{equation}\label{eq:kerappr-error-5}
\begin{split}
&\Mean\abs{Y_L-\vbeta_{\Ker,\vL_n}(L)^T\vY_{\vL_n}}^2\\
=&
\Mean\abs{Y_L}^2-2\vbeta_{\Ker,\vL_n}(L)^T\Mean(Y_L\vY_{\vL_n})
+\vbeta_{\Ker,\vL_n}(L)^T\Mean(\vY_{\vL_n}\vY_{\vL_n}^T)\vbeta_{\Ker,\vL_n}(L)\\
=&
\Ker(L,L)-2\vbeta_{\Ker,\vL_n}(L)^T\vk_{\Ker,\vL_n}(L)+\vbeta_{\Ker,\vL_n}(L)^T\vA_{\Ker,\vL_n}\vbeta_{\Ker,\vL_n}(L)
=\sigma_{L|\vL_n}^2.
\end{split}
\end{equation}
Let $C_1:=\sqrt{\Mean\abs{U}^2}$ and $C_2:=\abs{\gamma-1}$. Combining equations \eqref{eq:kerappr-error-4} and \eqref{eq:kerappr-error-5},
we complete the proof.
\end{proof}

\begin{corollary}\label{cor:kerappr-error}
Suppose that the mean $\mu=0$.
If the exact solution $u\in\range\big(\Gamma_{\Ker}^{\mu}\big)$, then for any $L\in\Banach^{\ast}$, the kernel-based estimator $s_{\vL_n,\vf_n}(L)$ has the error bound
\[
\abs{Lu-s_{\vL_n,\vf_n}(L)}\leq C_1\sigma_{L|\vL_n},
\]
where the constant $C_1\geq0$ is independent of $L$, $\vL_n$, and $\vf_n$.
\end{corollary}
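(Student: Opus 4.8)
The plan is to obtain the corollary as an immediate specialization of Theorem~\ref{thm:kerappr-error}, whose standing hypothesis $u\in\range\big(\Gamma_{\Ker}^{\mu}\big)$ is identical to the hypothesis assumed here. Consequently the general two-term bound
\[
\abs{Lu-s_{\vL_n,\vf_n}(L)}\leq C_1\sigma_{L|\vL_n}+C_2\abs{L\mu-\vbeta_{\Ker,\vL_n}(L)^T\vL_n\mu}
\]
is available verbatim, with the same constants $C_1=\sqrt{\Mean\abs{U}^2}\geq0$ and $C_2=\abs{\gamma-1}\geq0$, where $U\in\Hilbert_S$ represents $u$ and $\gamma=\Mean(U)$; both are independent of $L$, $\vL_n$, and $\vf_n$.

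The only step is to kill the second term under the assumption $\mu=0$. First I would observe that the dual bilinear product is linear in its Banach-space argument, so $L\mu=\langle\mu,L\rangle_{\Banach}=\langle 0,L\rangle_{\Banach}=0$; likewise, since $\vL_n$ is a bounded linear operator into $\RR^n$, we have $\vL_n\mu=\vL_n 0=0$ as a vector in $\RR^n$. Therefore
\[
L\mu-\vbeta_{\Ker,\vL_n}(L)^T\vL_n\mu=0-\vbeta_{\Ker,\vL_n}(L)^T 0=0,
\]
regardless of the value of $\vbeta_{\Ker,\vL_n}(L)$. Substituting this into the theorem's estimate makes the $C_2$-term vanish and leaves exactly $\abs{Lu-s_{\vL_n,\vf_n}(L)}\leq C_1\sigma_{L|\vL_n}$, which is the claimed inequality with the same constant $C_1$.

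There is essentially no analytic obstacle here; the result is a one-line specialization of the preceding theorem. The only point worth double-checking is that $C_1$ genuinely remains independent of the data after fixing $\mu=0$: since $C_1=\sqrt{\Mean\abs{U}^2}$ depends only on the representer $U$ of the fixed solution $u$ under the (now centered) measure $\PP_{\Ker}^{0}$, and not on $L$, $\vL_n$, or $\vf_n$, this independence is inherited directly from Theorem~\ref{thm:kerappr-error}. Hence no separate estimate is needed, and the proof is complete upon invoking the theorem.
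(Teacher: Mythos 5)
Your proposal is correct and matches the paper's proof, which simply invokes Theorem \ref{thm:kerappr-error} and notes the result follows immediately; your explicit verification that $L\mu=0$ and $\vL_n\mu=0$ kill the $C_2$-term is exactly what that one-line deduction amounts to.
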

\begin{proof}
The proof is completed by Theorem \ref{thm:kerappr-error} immediately.
\end{proof}

If the covariance kernel $\Ker$ is defined by the positive definite kernel $K$, then
for $L:=\delta_{\vx}$ and $\vL_n:=\left(\delta_{\vx_1},\cdots,\delta_{\vx_n}\right)^T$, the standard deviation $\sigma_{L|\vL_n}$ is equal to the classical power function
\[
q_{K,X}(\vx):=\sqrt{K(\vx,\vx)-\vb_{K,X}(\vx)^T\vA_{K,X}^{-1}\vb_{K,X}(\vx)},
\]
where $\vb_{K,X}(\vx):=\left(K(\vx,\vx_1),\cdots,K(\vx,\vx_n)\right)^T$. This shows that $\sigma_{L|\vL_n}$ can be viewed as a generalization of the classical power functions in \cite{Fasshauer2007,Wendland2005} and kriging functions in \cite{WuSchaback1993}.
Therefore, for the special $\sigma_{L|\vL_n}$, we can compute the convergent rates by using fill distances same as meshfree methods in \cite[Chapters 14 and 15]{Fasshauer2007} and \cite[Chapters 11 and 16]{Wendland2005}.

\begin{theorem}\label{thm:kerappr-error-noise}
If the exact solution $u\in\range\big(\Gamma_{\Ker}^{\mu}\big)$, then for any $L\in\Banach^{\ast}$, the kernel-based estimator $s_{\vL,\hatvf_n,\epsilon_n}(L)$ has the error bound
\[
\abs{Lu-s_{\vL,\hatvf_n,\epsilon_n}(L)}\leq C_1\sigma_{L|\vL_n}
+C_2\abs{L\mu-\vbeta_{\Ker,\vL_n}(L)^T\vL_n\mu}
+M_{L,\vL_n}\epsilon_n,
\]
where the constants $C_1\geq0$ and $C_2\geq0$ are independent of $L$, $\vL_n$, $\hatvf_n$, and $\epsilon_n$, and
the constant $M_{L,\vL_n}\geq0$ is independent of $\hatvf_n$ and $\epsilon_n$.
\end{theorem}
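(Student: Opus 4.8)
The plan is to insert the noise-free kernel-based estimator $s_{\vL_n,\vf_n}(L)$, built from the exact data $\vf_n=\vL_nu$, as an intermediate quantity and then invoke the already-proven error bound of Theorem \ref{thm:kerappr-error}. By the triangle inequality,
\[
\abs{Lu-s_{\vL_n,\hatvf_n,\epsilon_n}(L)}\leq\abs{Lu-s_{\vL_n,\vf_n}(L)}+\abs{s_{\vL_n,\vf_n}(L)-s_{\vL_n,\hatvf_n,\epsilon_n}(L)}.
\]
The first summand is exactly the quantity controlled by Theorem \ref{thm:kerappr-error}: since $u\in\range\big(\Gamma_{\Ker}^{\mu}\big)$, it is bounded by $C_1\sigma_{L|\vL_n}+C_2\abs{L\mu-\vbeta_{\Ker,\vL_n}(L)^T\vL_n\mu}$, where $C_1,C_2$ depend only on the representer $U$ of $u$ and are therefore independent of $L$, $\vL_n$, $\hatvf_n$, and $\epsilon_n$.

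For the second summand I would appeal to the coefficient representation of Corollary \ref{cor:kerappr-inter-coef}. Writing $s_{\vL_n,\vf_n}(L)=L\mu+\vb_{\Ker,\vL_n}(L)^T\vc_n$ and $s_{\vL_n,\hatvf_n,\epsilon_n}(L)=L\mu+\vb_{\Ker,\vL_n}(L)^T\hatvc_n$, the mean terms cancel, so that by the Cauchy--Schwarz inequality
\[
\abs{s_{\vL_n,\vf_n}(L)-s_{\vL_n,\hatvf_n,\epsilon_n}(L)}=\abs{\vb_{\Ker,\vL_n}(L)^T(\vc_n-\hatvc_n)}\leq\norm{\vb_{\Ker,\vL_n}(L)}_2\norm{\vc_n-\hatvc_n}_2.
\]
Applying the coefficient estimate $\norm{\vc_n-\hatvc_n}_2\leq 2\epsilon_n/\lambda_{\min}\big(\vA_{\Ker,\vL_n}\big)$ from Corollary \ref{cor:kerappr-inter-coef-error} bounds this by $M_{L,\vL_n}\epsilon_n$ with the explicit constant
\[
M_{L,\vL_n}:=\frac{2\norm{\vb_{\Ker,\vL_n}(L)}_2}{\lambda_{\min}\big(\vA_{\Ker,\vL_n}\big)},
\]
which depends on $L$ and $\vL_n$ only through the kernel basis and the spectrum of $\vA_{\Ker,\vL_n}$, and is manifestly independent of $\hatvf_n$ and $\epsilon_n$. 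Combining the two bounds yields the claimed inequality with the same $C_1,C_2$ as in Theorem \ref{thm:kerappr-error}.

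The argument is a perturbation estimate layered on top of the noise-free result, so I do not expect a serious obstacle. The only point requiring care is to verify that the intermediate estimator $s_{\vL_n,\vf_n}(L)$ is legitimately defined from the exact evaluation $\vf_n=\vL_nu$ of the same solution $u$ appearing in the hypothesis, so that Theorem \ref{thm:kerappr-error} applies verbatim and its constants $C_1,C_2$ remain genuinely independent of the noisy data. Everything else reduces to Cauchy--Schwarz and the previously established bound on $\norm{\vc_n-\hatvc_n}_2$.
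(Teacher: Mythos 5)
Your proposal is correct, and its skeleton is the paper's own: the same triangle inequality through the noise-free estimator $s_{\vL_n,\vf_n}(L)$ built from $\vf_n=\vL_nu$, with the first term handled verbatim by Theorem \ref{thm:kerappr-error}, so that $C_1$ and $C_2$ are inherited and remain independent of $\hatvf_n$ and $\epsilon_n$. The only divergence is how the perturbation term is bounded. The paper writes
$\abs{s_{\vL_n,\vf_n}(L)-s_{\vL_n,\hatvf_n,\epsilon_n}(L)}
=\abs{\vb_{\Ker,\vL_n}(L)^T\vA_{\Ker,\vL_n}^{\dag}\big(\vf_n-\veta_{\Ker,\vL_n}^{\epsilon_n}\big(\hatvf_n\big)\big)}$
and applies the $\vA_{\Ker,\vL_n}^{\dag}$-weighted Cauchy--Schwarz inequality, exactly as in equation \eqref{eq:kerappr-conv-noise-8} of Theorem \ref{thm:kerappr-conv-noise}; combining $\vb_{\Ker,\vL_n}(L)^T\vA_{\Ker,\vL_n}^{\dag}\vb_{\Ker,\vL_n}(L)\leq\Ker(L,L)$ (nonnegativity of $\sigma_{L|\vL_n}^2$) with $\ve_n^T\vA_{\Ker,\vL_n}^{\dag}\ve_n\leq 4\epsilon_n^2/\lambda_{\min}\big(\vA_{\Ker,\vL_n}\big)$ yields $M_{L,\vL_n}=2\sqrt{\Ker(L,L)/\lambda_{\min}\big(\vA_{\Ker,\vL_n}\big)}$. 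You instead use plain Euclidean Cauchy--Schwarz together with Corollary \ref{cor:kerappr-inter-coef-error}, obtaining $M_{L,\vL_n}=2\norm{\vb_{\Ker,\vL_n}(L)}_2/\lambda_{\min}\big(\vA_{\Ker,\vL_n}\big)$. Both constants depend only on $L$ and $\vL_n$ and are independent of $\hatvf_n$ and $\epsilon_n$, so either route proves the theorem as stated. The practical difference is sharpness in the ill-conditioned regime: the paper's symmetric splitting of $\vA_{\Ker,\vL_n}^{\dag}$ gives a constant scaling like $\lambda_{\min}^{-1/2}$ with the intrinsic factor $\Ker(L,L)$, whereas your constant scales like $\lambda_{\min}^{-1}$ and therefore degrades faster as the smallest positive eigenvalue of $\vA_{\Ker,\vL_n}$ shrinks, which is the typical situation for dense data.
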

\begin{proof}
By the same method of the proof of Theorem~\ref{thm:kerappr-conv-noise} such as equation \eqref{eq:kerappr-conv-noise-8},
we have that
\begin{equation}\label{eq:kerappr-error-noise-1}
\abs{s_{\vL_n,\vf_n}(L)-s_{\vL_n,\hatvf_n,\epsilon_n}(L)}^2
=
\abs{\vb_{\Ker,\vL_n}(L)^T\vA_{\Ker,\vL_n}^{\dag}\left(\vf_n-\veta_{\Ker,\vL_n}^{\epsilon_n}\big(\hatvf_n\big)\right)}^2
\leq
\frac{4\Ker(L,L)\epsilon_n^2}{\lambda_{\min}\big(\vA_{\Ker,\vL_n}\big)}.
\end{equation}
Let
\[
M_{L,\vL_n}:=
2\sqrt{\frac{\Ker(L,L)}{\lambda_{\min}\big(\vA_{\Ker,\vL_n}\big)}}.
\]
Putting equations \eqref{eq:kerappr-error-bound-1} and \eqref{eq:kerappr-error-noise-1} into
\[
\abs{Lu-s_{\vL_n,\hatvf_n,\epsilon_n}(L)}
\leq
\abs{Lu-s_{\vL_n,\vf_n}(L)}+\abs{s_{\vL_n,\vf_n}(L)-s_{\vL_n,\hatvf_n,\epsilon_n}(L)},
\]
we complete the proof.
\end{proof}

Here, we only consider the situation that the noise $\vxi_n$ satisfies $\norm{\vxi_n}_2\leq\epsilon_n$.
Actually, we can construct and analyze another kernel-based estimator for many kinds of noisy data by the same methods in this section.
For example, the noise $\vxi_n$ is reconsidered to satisfy that $\norm{\vxi_n}_{\infty}\leq\epsilon_n$.
Thus, we replace the closed ball $\hatvf_n+B_n\big(0,\epsilon_n\big)$ to the closed cube $\hatvf_n+[-\epsilon_n,\epsilon_n]^n$ to reconstruct the kernel-based estimator $s_{\vL_n,\hatvf_n,\epsilon_n}(L)$ in equation \eqref{eq:kerappr-4}.
To be more precise, $\veta_{\Ker,\vL_n}^{\epsilon_n}\big(\hatvf_n\big)$
is recomputed from the average over $\hatvf_n+[-\epsilon_n,\epsilon_n]^n$ measured by $\PP_{\Ker}^{\mu}$.

There are many applications for generalized interpolations by kernel-based probability measures. Specially, we will use the kernel-based estimators to approximate the solutions of elliptic partial differential equations in the next section.

\section{Constructions of Numerical Solutions of Elliptic Partial Differential Equations by Kernel-based Probability Measures}\label{sec:MeshfreeApproxPDE}

In this section, we solve an elliptic problem by the kernel-based probability measures.
Suppose that the domain $\Domain\subseteq\Rd$ is regular and compact.
We want to solve an elliptic equation with a Dirichlet boundary condition
\begin{equation}\label{eq:pde}
\begin{cases}
\Delta u = f,&\text{in }\Domain,\\
~~u=g,&\text{on }\partial\Domain,
\end{cases}
\end{equation}
where $f\in\Cont(\Domain)$ and $g\in\Cont(\partial\Domain)$ such that $u\in\Hilbert^m(\Domain)$ for $m>d/2$.
By the maximum principle, elliptic equation \eqref{eq:pde} exits the unique solution $u$.
Sobolev imbedding theorem assures that $\Hilbert^m(\Domain)$ is imbedding in $\Cont(\Domain)$. Thus, we know that $\delta_{\vx}\in\Hilbert^m(\Domain)^{\ast}$ for $\vx\in\Domain$.

Let the mollifiers
\[
\varphi_r(\vx):=r^{-d}\varphi\big(r^{-1}\vx\big),\quad\text{for }\vx\in\Rd\text{ and }r>0,
\]
and
\[
\varphi(\vx):=
\begin{cases}
\kappa\exp\left(-\left(1-\norm{\vx}_2^2\right)^{-1}\right),&\text{if }\norm{\vx}_2<1,\\
0,&\text{otherwise},
\end{cases}
\]
where the constant $\kappa$ is chosen such that $\int_{\Rd}\varphi(\vx)\ud\vx=1$.
Clearly $\varphi\in\Cont_0^{\infty}(\Rd)$.
For the data point $(\vx,r)\in\Domain^{\circ}\times\RR_+$ such that $B_d(\vx,r)\subseteq\Domain$, we define a linear functional
\begin{equation}\label{eq:pde-L-varphi}
L_{(\vx,r)}(\omega):=\int_{\Domain}\nabla\omega(\vy)^T\nabla\varphi_{r}(\vy-\vx)\ud\vy,
\quad\text{for }\omega\in\Hilbert^1(\Domain),
\end{equation}
where $\nabla$ is a gradient.
Thus, $L_{(\vx,r)}\in\Hilbert^m(\Domain)^{\ast}$. We also define a scalar
\begin{equation}\label{eq:pde-f-varphi}
h(\vx,r):=\int_{\Domain}f(\vy)\varphi_{r}(\vy-\vx)\ud\vy.
\end{equation}

Let $(\vx_1,r_1),\ldots,(\vx_{n_1},r_{n_1})\in\Domain^{\circ}\times\RR_+$ such that
$B_d(\vx_1,r_1),\ldots,B_d(\vx_{n_1},r_{n_1})\subseteq\Domain$.
Thus, we have the test functions $\varphi_{r_k}(\cdot-\vx_k)\in\Cont_0^{\infty}(\Domain)$ for $k=1,\ldots,n_1$.
Let $\vz_{1},\ldots,\vz_{n_2}\in\partial\Domain$. Based on the data points, we obtain the data
\[
\vL_n:=\big(L_{(\vx_1,r_1)},\cdots,L_{(\vx_{n_1},r_{n_1})},\delta_{\vz_1},\cdots,\delta_{\vz_{n_2}}\big)^T,
\]
and
\[
\vf_n:=\big(h(\vx_1,r_1),\cdots,h(\vx_{n_1},r_{n_1}),g(\vz_1),\cdots,g(\vz_{n_2})\big)^T,
\]
where $n:=n_1+n_2$.
Elliptic equation \eqref{eq:pde} shows that $\vL_nu=\vf_n$.

Let $K$ be a symmetric strictly positive definite kernel on $\Domain$.
Suppose that $\mu\in\Hilbert^m(\Domain)$ and $K\in\Cont^{2m,1}(\Domain\times\Domain)$.
Just like Example \ref{exa:PDK-Sobolev},
there exists the kernel-based probability measure $\PP_{\Ker}^{\mu}$ on $\Hilbert^m(\Domain)$ with the mean $\mu$ and the covariance kernel
$\Ker(L_1,L_2):=L_{1,\vx}L_{2,\vy}K(\vx,\vy)$ for $L_1,L_2\in\Hilbert^m(\Domain)^{\ast}$.
Thus, we have that
\[
\Ker\big(\delta_{\vx},\delta_{\vz}\big)=K(\vx,\vz),
\quad
\Ker\left(\delta_{\vx},L_{(\vz,r)}\right)
=\int_{\Domain}\nabla_{\vy}K(\vx,\vy)^T\nabla_{\vy}\varphi_{r}(\vy-\vz)\ud\vx,
\]
and
\[
\Ker\big(L_{(\vx,r)},L_{(\vz,\gamma)}\big)=\int_{\Domain}\int_{\Domain}
\nabla_{\vy}\varphi_{r}(\vy-\vx)^T
\nabla_{\vy}\nabla_{\vv}^TK(\vy,\vv)
\nabla_{\vv}\varphi_{\gamma}(\vv-\vz)\ud\vy\ud\vv,
\]
for $\vx,\vz\in\Domain$ and $r,\gamma>0$.
We define a vector function
\[
\vvarphi_{n_1}(\vx):=\left(\varphi_{r_1}(\vx-\vx_1),\cdots,\varphi_{r_{n_1}}(\vx-\vx_{n_1})\right)^T,
\quad\text{for }\vx\in\Domain.
\]
Thus, the vectors $\vb_{\Ker,\vL_n}\big(\delta_{\vx}\big)$ and $\vL_n\mu$ have the forms of
\[
\vb_{K,\vL_n}(\vx):=\vb_{\Ker,\vL_n}\big(\delta_{\vx}\big)=
\begin{pmatrix}
\vb_{n_1}(\vx)\\
\vb_{n_2}(\vx)
\end{pmatrix}
\text{ and }
\vzeta_n:=\vL_n\mu=
\begin{pmatrix}
\vzeta_{n_1}\\\vzeta_{n_2}
\end{pmatrix}
\]
where
\[
\vb_{n_1}(\vx):=\int_{\Domain}\nabla_{\vy}K(\vx,\vy)^T\nabla_{\vy}\vvarphi_{n_1}(\vy)\ud\vy,
\quad
\vb_{n_2}(\vx):=\left(K(\vx,\vz_1),\cdots,K(\vx,\vz_{n_2})\right)^T,
\]
and
\[
\vzeta_{n_1}:=\int_{\Domain}\nabla\vvarphi_{n_1}(\vx)^T\nabla\mu(\vx)\ud\vx,
\quad
\vzeta_{n_2}:=\left(\mu(\vz_1),\cdots,\mu(\vz_{n_2})\right)^T,
\]
respectively.
Moreover, the covariance matrix $\vA_{\Ker,\vL_n}$ has the form of
\[
\vA_{\Ker,\vL_n}=
\begin{pmatrix}
\vA_{11}&\vA_{12}\\
\vA_{21}&\vA_{22}
\end{pmatrix},
\]
where
\[
\vA_{11}:=\int_{\Domain}\int_{\Domain}
\nabla_{\vx}\vvarphi_{n_1}(\vx)^T
\nabla_{\vx}\nabla_{\vy}^TK(\vx,\vy)
\nabla_{\vy}\vvarphi_{n_1}(\vy)\ud\vx\ud\vy,
\]
and
\[
\vA_{12}=\vA_{21}^T
:=\left(\vb_{n_1}(\vz_1),\cdots,\vb_{n_1}(\vz_{n_2})\right),
\quad
\vA_{22}=\left(\vb_{n_2}(\vz_1),\cdots,\vb_{n_2}(\vz_{n_2})\right).
\]
Here, $\nabla\vvarphi_n$ is the Jacobian matrix of $\vvarphi_n$ and $\nabla_{\vx}\nabla_{\vy}^T$ is an operator matrix.
According to Theorem~\ref{thm:kerappr-inter} and Corollary~\ref{cor:kerappr-inter-coef}, we have the kernel-based estimator
\begin{equation}\label{eq:pde-kerappr}
\hat{u}_{\vL_n,\vf_n}(\vx):=s_{\vL_n,\vf_n}(\delta_{\vx})
=\mu(\vx)+\vb_{K,\vL_n}(\vx)^T\vc_n,
\end{equation}
where $\vc_n$ is the least-squared solution of the linear system
\[
\vA_{\Ker,\vL_n}\vc_n=\vf_n-\vzeta_n.
\]
Clearly $\hat{u}_{\vL_n,\vf_n}$ can be viewed as a function on $\Domain$.
Through Equation \eqref{eq:pde-kerappr}, we have that $\hat{u}_{\vL_n,\vf_n}\in\Hilbert^m(\Domain)$. We call $\hat{u}_{\vL_n,\vf_n}$ a \emph{kernel-based approximate function} conditioned on the data $(\vL_n,\vf_n)$.

Finally, we show the convergence of $\hat{u}_{\vL_n,\vf_n}$.
Suppose that $X:=\left\{\vx_{n_1}:n_1\in\NN\right\}$ is a dense subset of $\Domain^{\circ}$
and $Z:=\left\{\vz_{n_2}:n_2\in\NN\right\}$ is a dense subset of $\partial\Domain$.
Let $\Theta:=\left\{r_{n_3}:n_3\in\NN\right\}$ be a decrease subsequence such that $r_{n_3}\to0$ when $n_3\to\infty$.
Using the data points $X\times\Theta$ and $Z$, we construct the functionals
\begin{equation}\label{eq:pde-Lambda}
\Lambda:=\big\{L_{(\vx_{n_1},r_{n_3})},~\delta_{\vz_{n_2}}:\vx_{n_1}\in X,~r_{n_3}\in\Theta,~\vz_{n_2}\in Z\big\},
\end{equation}
as shown in equation \eqref{eq:pde-L-varphi}.
Moreover, we also have the related data values
\begin{equation}\label{eq:pde-Xi}
\Xi:=\left\{h(\vx_{n_1},r_{n_3}),~g(\vz_{n_2}):\vx_{n_1}\in X,~r_{n_3}\in\Theta,~\vz_{n_2}\in Z\right\},
\end{equation}
as shown in equation \eqref{eq:pde-f-varphi}. Since $\Lambda$ and $\Xi$ are the countable sets,
the set $\Lambda\times\Xi$ can be reordered as $\left\{(L_n,f_n):n\in\NN\right\}$.
Thus, we have the data $\vL_n=\left(L_1,\cdots,L_n\right)^T$ and $\vf_n=\left(f_1,\cdots,f_n\right)^T$ for all $n\in\NN$.

\begin{proposition}\label{pro:pde-conv}
If the data $\left\{(\vL_n,\vf_n):n\in\NN\right\}$ are given by the set $\Lambda\times\Xi$ in equations \eqref{eq:pde-Lambda} and \eqref{eq:pde-Xi}, then
the kernel-based approximate function $\hat{u}_{\vL_n,\vf_n}$ converges to
the solution $u$ of elliptic equation \eqref{eq:pde} uniformly when $n\to\infty$.
\end{proposition}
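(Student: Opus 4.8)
The plan is to realise the data $(\vL_n,\vf_n)$ as the non-noise interpolation events of Example~\ref{exa:Hermite-Birkhoff}, verify the hypotheses of Theorem~\ref{thm:kerappr-conv}, and thereby obtain the pointwise convergence $\hat{u}_{\vL_n,\vf_n}(\vx)=s_{\vL_n,\vf_n}(\delta_{\vx})\to u(\vx)$ for each $\vx\in\Domain$ (note $\delta_{\vx}\in\Hilbert^m(\Domain)^{\ast}$ by the Sobolev imbedding), which I then upgrade to uniform convergence on the compact set $\Domain$. The nesting $\Aset_1\supseteq\Aset_2\supseteq\cdots$ is immediate: the reordering of $\Lambda\times\Xi$ only appends functionals, so $\vL_{n+1}$ extends $\vL_n$, and since the data are exact, $\vf_n=\vL_n u=\vrho_n(\vf_{n+1})$. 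As $u\in\Aset_n$ for every $n$, we have $u\in\Aset_{\infty}$, so the substantive point is the reverse inclusion $\Aset_{\infty}\subseteq\{u\}$.

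For this I would fix $\omega\in\Aset_{\infty}$ and set $w:=\omega-u$. The boundary conditions give $w(\vz_{n_2})=0$ for all $\vz_{n_2}\in Z$; since $Z$ is dense in $\partial\Domain$ and $w$ is continuous up to the boundary, $w$ vanishes on $\partial\Domain$. For the interior functionals, $L_{(\vx_{n_1},r_{n_3})}w=0$; because $\varphi_{r_{n_3}}(\cdot-\vx_{n_1})\in\Cont_0^{\infty}(\Domain)$, integrating \eqref{eq:pde-L-varphi} by parts kills the boundary term and identifies this with the distributional pairing $-\langle\Delta w,\varphi_{r_{n_3}}(\cdot-\vx_{n_1})\rangle=-(\Delta w\ast\varphi_{r_{n_3}})(\vx_{n_1})$. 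Thus the smooth mollification $\Delta w\ast\varphi_{r_{n_3}}$ vanishes on the dense set $X$, hence identically, for each $r_{n_3}\in\Theta$; letting $r_{n_3}\to0$ gives $\Delta w=0$ in $\Domain^{\circ}$. By interior elliptic regularity $w$ is harmonic, and with zero boundary trace the maximum principle forces $w\equiv0$. Hence $\Aset_{\infty}=\{u\}$, and Theorem~\ref{thm:kerappr-conv} yields, for every fixed $\vx$, both $\hat{u}_{\vL_n,\vf_n}(\vx)\to u(\vx)$ and $\sigma_{\delta_{\vx}|\vL_n}\to0$.

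To pass from pointwise to uniform convergence I would dominate the error by the generalised power function. Since the measure is the one of Example~\ref{exa:PDK-Sobolev}, whose Cameron--Martin space is $\Hilbert^m(\Domain)$, we have $u\in\range(\Gamma_{\Ker}^{\mu})$, so Theorem~\ref{thm:kerappr-error} applies with $L=\delta_{\vx}$ and constants independent of $\vx$ and $n$; after the harmless normalisation $\mu=0$ (subtract $\mu$ from $u$ and adjust $f,g$, which does not alter $\sigma_{\delta_{\vx}|\vL_n}$) Corollary~\ref{cor:kerappr-error} leaves the single bound $\abs{u(\vx)-\hat{u}_{\vL_n,\vf_n}(\vx)}\le C_1\sigma_{\delta_{\vx}|\vL_n}$. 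It then suffices to show $\sup_{\vx\in\Domain}\sigma_{\delta_{\vx}|\vL_n}\to0$. The function $\vx\mapsto\sigma_{\delta_{\vx}|\vL_n}^2=K(\vx,\vx)-\vb_{K,\vL_n}(\vx)^T\vA_{\Ker,\vL_n}^{\dag}\vb_{K,\vL_n}(\vx)$ is continuous because $K\in\Cont^{2m,1}$, it is nonincreasing in $n$ because it equals the residual variance $\Var(S_{\delta_{\vx}}\,|\,\vS_{\vL_n})$ and enlarging $\vL_n$ cannot increase it, and by the previous paragraph it tends pointwise to the continuous limit $0$. Dini's theorem on the compact $\Domain$ then gives $\sup_{\vx}\sigma_{\delta_{\vx}|\vL_n}\to0$, hence $\sup_{\vx}\abs{u(\vx)-\hat{u}_{\vL_n,\vf_n}(\vx)}\to0$. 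Alternatively, one may bound $\sup_{\vx}\sigma_{\delta_{\vx}|\vL_n}$ by fill-distance estimates as in the remark following Corollary~\ref{cor:kerappr-error}, which also yields rates.

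I expect the uniform upgrade, not the pointwise statement, to be the crux. Two points need genuine care: verifying $u\in\range(\Gamma_{\Ker}^{\mu})$, i.e. that the kernel's native space is large enough to contain the solution (clean for the Sobolev-spline kernels of Example~\ref{exa:PDK-Sobolev}, but not automatic for an arbitrary $K\in\Cont^{2m,1}$), and checking that $\sigma_{\delta_{\vx}|\vL_n}^2$ is genuinely monotone and continuous in $\vx$ so that Dini applies. A secondary subtlety, inside the uniqueness step, is justifying the passage $r_{n_3}\to0$ together with the density transitions on $X$ and $Z$; these rest on Weyl-type interior regularity for the harmonic remainder $w$ and on the Sobolev continuity of $w$ up to $\partial\Domain$.
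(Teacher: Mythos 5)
Your uniqueness argument and the pointwise convergence step are correct and are exactly the paper's route: the paper likewise constructs the nested interpolation events $\Aset_n$ from Example \ref{exa:Hermite-Birkhoff}, asserts (with far less detail than you supply) that ``the properties of mollification and the maximum principle'' give $\Aset_{\infty}=\{u\}$, and then invokes Theorem \ref{thm:kerappr-conv} with $L=\delta_{\vx}$. Your integration-by-parts/Weyl/maximum-principle chain is precisely the justification the paper leaves implicit, and it is sound.

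The genuine gap is in your uniform-convergence upgrade. It hinges on the claim that the Cameron--Martin space of the measure of Example \ref{exa:PDK-Sobolev} is all of $\Hilbert^m(\Domain)$, so that $u\in\range\big(\Gamma_{\Ker}^{\mu}\big)$ holds automatically and Theorem \ref{thm:kerappr-error} / Corollary \ref{cor:kerappr-error} can be applied. That claim is false: for a nondegenerate Gaussian measure on an infinite-dimensional Banach space the Cameron--Martin space is always a proper, measure-zero subspace of the space carrying the measure; here $\range\big(\Gamma_{\Ker}^{\mu}\big)$ is essentially the native space of $K$, whose elements are roughly ``twice as smooth'' as a generic element of $\Hilbert^m(\Domain)$ (compare Wiener measure on $\Cont([0,1])$, whose Cameron--Martin space is the Sobolev space $\Hilbert_0^1([0,1])$, not $\Cont([0,1])$). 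The proposition assumes only that the solution satisfies $u\in\Hilbert^m(\Domain)$, so your error-bound-plus-Dini route proves uniform convergence only under an additional smoothness hypothesis on $u$ that the statement does not grant. The Dini machinery itself is fine --- continuity of $\vx\mapsto\sigma_{\delta_{\vx}|\vL_n}^2$, monotonicity in $n$ as a residual variance, and pointwise convergence to $0$ all hold --- but the domination $\abs{u(\vx)-\hat{u}_{\vL_n,\vf_n}(\vx)}\leq C_1\sigma_{\delta_{\vx}|\vL_n}$ is unavailable for general $u$, so the whole chain collapses at its first link. For comparison, the paper takes a much cruder route at this point: it passes from pointwise to uniform convergence by appealing only to the compactness of $\Domain$ and the continuity of $u$ and $\hat{u}_{\vL_n,\vf_n}$. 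Your instinct that this step is the crux and needs more than compactness is well founded, but the specific fix you chose imports a hypothesis ($u$ in the native space of $K$) that a solution which is merely in $\Hilbert^m(\Domain)$ need not satisfy; to stay within the proposition's hypotheses you would need either an equicontinuity/monotonicity argument for the estimators themselves or an entirely different mechanism for uniformity.
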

\begin{proof}
Since $\Hilbert^m(\Domain)\subseteq\Cont(\Domain)$,
we know that $u,\hat{u}_{\vL_n,\vf_n}\in\Cont(\Domain)$. If we prove that
\[
\lim_{n\to\infty}\hat{u}_{\vL_n,\vf_n}(\vx)=u(\vx),\quad\text{for }\vx\in\Domain,
\]
then the compactness of $\Domain$ assures that
\[
\lim_{n\to\infty}\norm{\hat{u}_{\vL_n,\vf_n}-u}_{\infty}=0.
\]

Now we show the convergence of the kernel-based estimator $\hat{u}_{\vL_n,\vf_n}(\vx)$ in equation \eqref{eq:pde-kerappr}.
Let the interpolation events $\left\{\Aset_n:n\in\NN\right\}$ be constructed by the data $\left\{(\vL_n,\vf_n):n\in\NN\right\}$ same as Example \ref{exa:Hermite-Birkhoff}.
By the construction of $\Lambda\times\Xi$, the properties of mollification and the maximum principle
assure that $L_n\omega=f_n$ for all $n\in\NN$ if and only if $\omega=u$. This shows that
$\Aset_1\supseteq\ldots\supseteq\Aset_n\supseteq\ldots\supseteq
\Aset_{\infty}=\left\{u\right\}$. Therefore,
Theorem~\ref{thm:kerappr-conv}
guarantees that $s_{\vL_n,\vf_n}(\vx)$ converges to $u(\vx)$ when $n\to\infty$ by replacing $L=\delta_{\vx}$.
\end{proof}

%

In this article, we show how to solve the elliptic problems by the kernel-based probability measures.
We can construct and analyze the kernel-based approximate functions by the deterministic scattered data.
Different from the classical meshfree methods, we introduce the algorithms and theorems by the stochastic approaches. In the next article, we will apply the kernel-based probability measures to improve the meshfree methods for high-dimensional (stochastic) partial differential equations in \cite{CialencoFasshauerYe2012,DehghanShirzadi2015,GriebelRieger2017,HonSchabackZhong2013,KansaPowerFasshauerLing2004,Ling2006,LingSchaback2008}.

\section*{Acknowledgments}

The author would like to acknowledge support for this project from the ''Thousand Talents Program" of China, the Natural Science Foundation of
China (NSF grant 11601162), and the South China Normal University (school grants 671082,
S80835, S81031, and S81361).




\bibliographystyle{plain}
\bibliography{KernelYe}


%
%
%

\end{document}